\theoremstyle{plain}
\newtheorem{theorem}{Theorem}
\newtheorem{lemma}{Lemma}
\newtheorem{corollary}{Corollary}
\newtheorem{claim}{Claim}
\newtheorem{remark}{Remark}
\newcommand{\bi}{\begin{itemize}}
\newcommand{\ei}{\end{itemize}}
\newcommand{\be}{\begin{enumerate}}
\newcommand{\ee}{\end{enumerate}}
\newcommand{\ob}[1]{\overline{#1}}
\numberwithin{definition}{section}
\numberwithin{example}{section}
\numberwithin{lemma}{section}
\numberwithin{theorem}{section}
\numberwithin{corollary}{section}
\begin{document}
\title{Twist Number and the Alternating Volume of Knots}
\author[H. Allen]{Heidi Allen}
\author[R. Blair]{Ryan Blair}
\author[L. Rodriguez]{Leslie Rodriguez}

\maketitle

\begin{abstract}
It was previously shown by the second author that every knot in $S^3$ is ambient isotopic to one component of a two-component, alternating, hyperbolic link. In this paper, we define the alternating volume of a knot $K$ to be the minimum volume of any link $L$ in a natural class of alternating, hyperbolic links such that $K$ is ambient isotopic to a component of $L$. Our main result shows that the alternating volume of a knot is coarsely equivalent to the twist number of a knot.
\end{abstract}

\section{Introduction}

A core idea in the study of low-dimensional topology and geometry is to develop a dictionary of translation between combinatorial information and geometric information. This approach has been particularly successful in the study of hyperbolic links in $S^3$. In particular, significant progress has been made in relating the hyperbolic volume of links to the combinatorics of link diagrams.

The combinatorial invariant of twist number has been shown to have deep connections to the hyperbolic volume of an alternating, hyperbolic link. In the sphere of projection for a link diagram $D$, a \emph{twist region} is a maximal collection of bigons in the link diagram stacked end to end or a neighborhood of a crossing not incident to any bigon. The integer $t(D)$ denotes the number of twist regions of $D$. Lackenby showed that if a hyperbolic link has a prime alternating diagram $D$, then the hyperbolic volume of that link is coarsely equivalent to $t(D)$ (i.e. the hyperbolic volume is bounded both above and below by a linear function of $t(D)$) \cite{Lackenby}. Hence, for such links, hyperbolic volume is roughly equated to $t(D)$. The twist number of a link $L$ is denoted $t(L)$ and is the minimal value of $t(D)$ over all diagrams $D$ of $L$.

In \cite{Blair1}, the second author produced an algorithm that can be applied to any diagram of any knot $K$ to produce a diagram of an alternating link $L$ such that the projection of $K$ is contained in the projection of $L$. This algorithm together with results of Menasco \cite{Menasco} were combined to show that given any knot $K$ in $S^3$, there exists an unknot in the complement of $K$, denoted $U$, such that $K\cup U$ is an alternating, hyperbolic link. Roughly speaking, we define the \emph{alternating volume} of a knot $K$, denoted $AltVol(K)$, to be the minimum volume of any such hyperbolic alternating link $K\cup U$. The precise definition of alternating volume will be given in the next section. Our main result demonstrates that the alternating volume of a knot is coarsely equivalent to the twist number of a knot. In particular, we show the following:

\begin{theorem}[Main]
Let $V_3$ be the volume of a regular ideal hyperbolic tetrahedron. Given any non-alternating knot $K$,

$$V_3(t(K)-2)\leq AltVol(K)\leq 10V_3(5t(K)-1).$$
\end{theorem}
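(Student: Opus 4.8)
The plan is to prove the two inequalities separately, in each case translating between hyperbolic volume and twist number through Lackenby's coarse volume estimates \cite{Lackenby} and then controlling how the twist number changes under the operations relating $K$ to the alternating links $K\cup U$ in the admissible class. Throughout I write $\mathrm{vol}(L)$ for hyperbolic volume and use that, by definition, $AltVol(K)$ is the minimum of $\mathrm{vol}(L)$ over all admissible $L=K\cup U$, each of which is prime, alternating, and hyperbolic and so falls under the hypotheses of Lackenby's theorem. The non-alternating hypothesis on $K$ guarantees that the construction of \cite{Blair1} genuinely produces such a link.

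For the lower bound I would take an arbitrary admissible link $L=K\cup U$ together with its prime alternating diagram $D_L$. Since $L$ is prime, alternating, and hyperbolic, the refined lower bound (Lackenby, as sharpened by Agol--Storm--Thurston) applies and gives $\mathrm{vol}(L)\ge V_3\,(t(D_L)-2)$. The key combinatorial step is then to show $t(D_L)\ge t(K)$. For this I would delete the component $U$ from $D_L$ to obtain a diagram $D_K$ of $K$; deleting a component only erases crossings and can merge, but never split, the twist regions composed of self-crossings of $K$, so the self-crossings of $K$ are grouped into no more twist regions in $D_K$ than they already occupy in $D_L$, giving $t(D_K)\le t(D_L)$. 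Combining this with the definition $t(K)\le t(D_K)$ yields $t(K)\le t(D_L)$, hence $\mathrm{vol}(L)\ge V_3(t(K)-2)$. Taking the minimum over all admissible $L$ produces $AltVol(K)\ge V_3(t(K)-2)$.

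For the upper bound it suffices to exhibit a single admissible link of small volume. I would start from a diagram $D$ of $K$ realizing the twist number, so $t(D)=t(K)$, and apply the algorithm of \cite{Blair1} together with \cite{Menasco} to produce an alternating, prime, hyperbolic link $L=K\cup U$ in the natural class whose standard alternating diagram $D_L$ has its twist regions controlled by those of $D$. The heart of the argument is the combinatorial bound $t(D_L)\le 5\,t(K)$: each twist region of $D$ should contribute at most five twist regions to $D_L$ once $U$ has been threaded through the projection to make it alternating. Granting this, Lackenby's upper bound gives $\mathrm{vol}(L)<10V_3(t(D_L)-1)\le 10V_3(5t(K)-1)$, and since $AltVol(K)\le \mathrm{vol}(L)$ the desired inequality follows.

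The main obstacle, and where I expect the real work to concentrate, is the upper-bound twist count $t(D_L)\le 5t(K)$: one must follow the algorithm of \cite{Blair1} region by region, verify that the arcs of $U$ introduced near each twist region of $D$ create only a bounded (at most five) number of new twist regions, and simultaneously confirm that the output diagram is prime and twist-reduced so that Lackenby's estimates apply and that $L$ is genuinely hyperbolic via \cite{Menasco}. A secondary technical point is the monotonicity $t(D_K)\le t(D_L)$ used in the lower bound, which must be argued carefully because erasing $U$ can create new bigons among the strands of $K$; since such new bigons only coarsen the partition of $K$'s self-crossings into twist regions, the inequality nonetheless goes in the direction needed.
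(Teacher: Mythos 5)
Your overall architecture coincides with the paper's: the lower bound via Lackenby applied to a volume-minimizing augmentation plus the monotonicity $t(K)\le t(G)$ (the paper's Lemma \ref{lowerbound}, proved by the same two-strand/refinement observation you sketch), and the upper bound via a twist-minimizing diagram, the augmentation algorithm of \cite{Blair1}, and Lackenby again. The lower half of your argument is essentially complete modulo one small point you should make explicit: to invoke Lackenby you need a \emph{prime} alternating diagram, and the paper gets this by taking $G$ reduced and citing Menasco's Theorem \ref{prime} (the augmentation is hyperbolic, hence non-split and prime, so its reduced alternating diagram is connected and prime).

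The genuine gap is in the upper bound: the inequality $t(G)\le 5t(K)$ is exactly the content of the paper's Lemma \ref{upperbound}, which occupies the bulk of the paper, and your proposal asserts it rather than proves it. Moreover, your sketch of how it would be proved --- ``the arcs of $U$ introduced near each twist region of $D$ create at most five new twist regions'' --- mislocates the difficulty. Blair's construction (Lemma \ref{BlairUnlink}) threads an \emph{unlink} $\ob{U}$, possibly with many components, through the non-alternating \emph{edges} of $D$; no arcs need be introduced near twist regions at all (indeed the whole point is that the projection of the augmenting curves can be kept disjoint from them). The real work, which your proposal does not address, is: (i) merging the components of $\ob{U}$ into a \emph{single} unknot $U$ while keeping the diagram alternating --- done in the paper by Type I/II moves along an arc $\mu$ chosen to minimize $|\mu\cap D|$, with surgery arguments showing $\mu$ can be taken disjoint from twist regions, disjoint from every edge meeting $A_u$, and meeting each remaining edge at most once; (ii) concluding that the final projection $A$ is a single simple closed curve disjoint from the twist regions of $D$ meeting each edge of $D$ at most twice; (iii) the count itself, which is global rather than per-twist-region: $t(G)\le t(D)+i(A,D)\le t(D)+2\cdot\bigl(\#\text{ of edges of $D$ outside twist regions}\bigr)\le t(D)+2(2t(D))=5t(D)$, the last step because collapsing each twist region to a vertex leaves a $4$-valent graph with $t(D)$ vertices and hence at most $2t(D)$ edges; and (iv) verifying that $K\cup U$ is non-split, prime and not a $(2,q)$ torus link (the paper rules out non-primeness of $G$ using the parity of the number of non-alternating edges of $D$), so that Menasco's theorem gives hyperbolicity. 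Without (i)--(iv), the upper bound is a plan, not a proof.
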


The above theorem demonstrates that alternating volume is a topologically meaningful method of assigning a hyperbolic structure to a non-hyperbolic knot. Since alternating knot complements have played a special role in the study of hyperbolic 3-manifolds, we hope that further study of the algorithm in \cite{Blair1} and alternating volume will result in additional interesting connections between hyperbolic structures and non-hyperbolic knots.

Our main result is inspired by recent work of Rieck and Yamashita \cite{RY} in which they define the link volume of any closed orientable 3-manifold. However, our techniques are similar to those used in \cite{Blair1} and differ significantly from those used by Rieck and Yamashita. The link volume is a weighted hyperbolic volume assigned to a closed orientable 3-manifold $M$ by viewing $M$ as a cover of $S^3$ branched over a hyperbolic link. Hence, Rieck and Yamashita are able to assign weighted hyperbolic volumes to 3-manifolds in a topologically meaningful way by relating these 3-manifolds to hyperbolic links.

\section{Definitions and Preliminaries}

In this paper, we will use the term \emph{link} to mean a smooth embedding of a disjoint collection of circles into $S^3$. A \emph{link projection} $D$ is the image of the link under a regular projection into a 2-sphere  $S\subset S^3$. We refer to $S$ as the \emph{sphere of projection}. The link projection $D$ is a finite 4-valent graph in $S$. A \emph{link diagram} is a link projection together with two edge labels assigned to every edge which encode crossing information. See figure \ref{fig: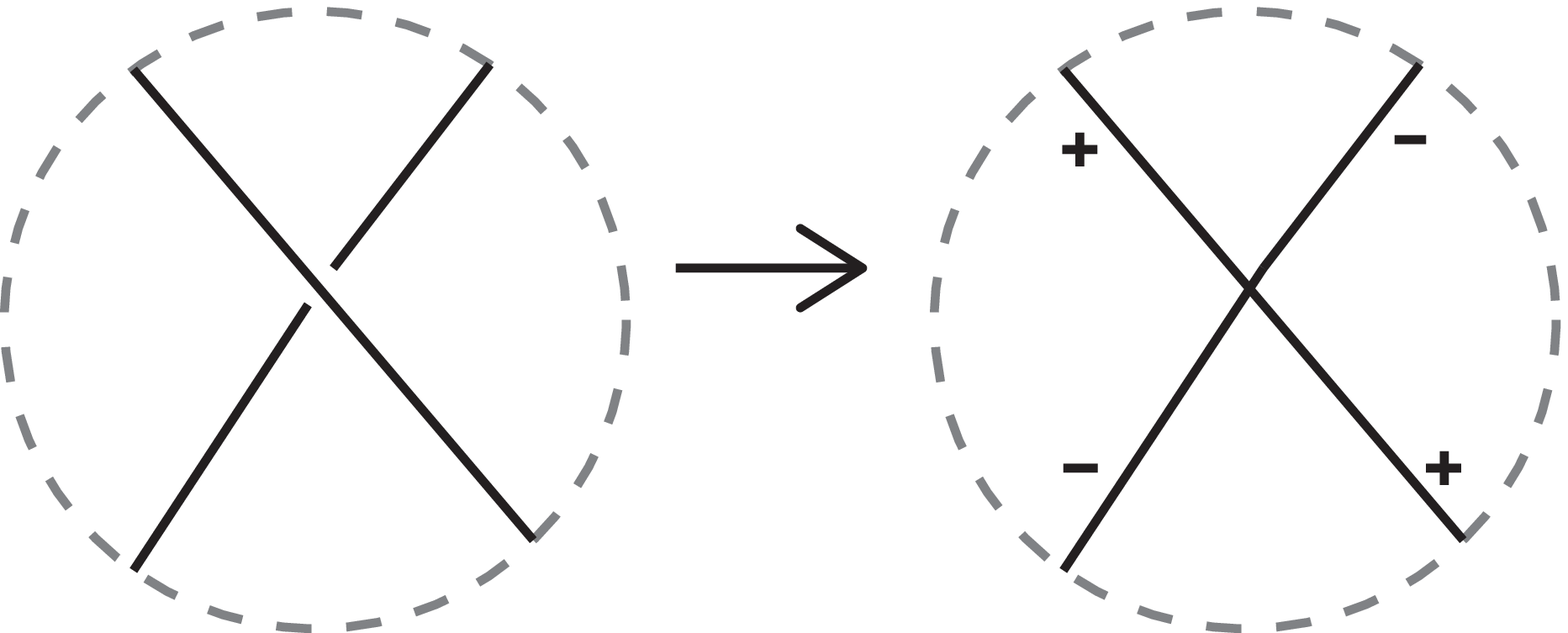}. Although this is the convention we use in our proofs, when illustrating link projections we use the standard convention in an effort to improved readability. Notice that every edge of $D$ receives two labels. An \emph{alternating edge} is labeled with both a plus and a minus and a \emph{non-alternating edge} is labeled with two plus signs or two minus signs. A diagram $D$ is \emph{alternating} if all edges of $D$ are alternating. Moreover, we say a diagram is non-alternating if it has at least one non-alternating edge. Note that this convention implies that the standard diagram of the unknot is \emph{not} non-alternating.

\begin{figure}[h]
\centering \scalebox{.3}{\includegraphics{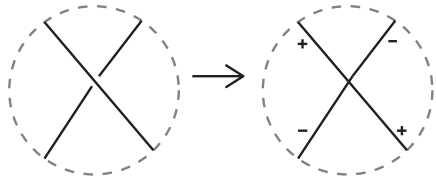}}
\caption{Our labeling convention for link diagrams.}\label{fig:4-valentgraph.eps}
\end{figure}

%Define $P$ to be the standard projection map then $P:(S^3\setminus \{x_1,x_2\})\rightarrow S^2$. Given a knot $K$ in $S^3$, the \emph{projection} of $K$ is the regular image of the restriction of the map $P$ to $K$. A \emph{knot diagram} is a knot projection $P$ together with information at each crossing that indicates which strand goes over and which one goes under. A knot diagram is \emph{alternating} if as we run along the knot diagram and record at each crossing if we went over $(o)$ or under $(u)$ another strand, then the collection of labels alternates between $(o)$ and $(u)$. A knot is \emph{alternating} if it has an alternating diagram.

Recall that a link is \emph{prime} if it cannot be decomposed into a connected sum of non-trivial links. A link is \emph{split} if there exists a 2-sphere embedded in the exterior of the link which separates components of the link. A link that is not split is called \emph{non-split}. A link diagram is \emph{connected} if it is a connected subset of the sphere of projection. We say a link diagram is \emph{prime} if, for every simple closed curve $C$ in the sphere of projection $S$ which intersects $D$ transversely in exactly two points neither of which is a vertex of $D$, $C$ bounds a disk $E$ in $S$ such that $E \cap D$ contains no vertices of $D$. If a link diagram $D$ has no cut vertex, we say $D$ is \emph{reduced}. A link diagram is \emph{$R2$-reduced} if there is no configuration of the form depicted in Figure \ref{fig:R2Move.eps}. Note that every link diagram can be converted to a reduced diagram via a finite sequence of flypes and every link diagram can be converted to a $R2$-reduced diagram by a finite sequence of type II Reidermeister moves.

 \begin{figure}[h]
\centering \scalebox{.2}{\includegraphics{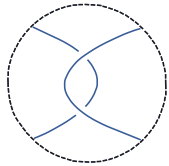}}
\caption{An $R2$-reduced diagram does not contain this configuration.}\label{fig:R2Move.eps}
\end{figure}

Given a link diagram $D$ contained in the sphere of projection $S$, a \emph{region} of $D$ is the closure of a component of $S\setminus D$ in $S$. A \emph{bigon region} is a disk region such that this disk meets $D$ in exactly two edges. Given a link diagram $D$, let $\tau$ be an open regular neighborhood of all bigon regions of $D$ in the sphere of projection. Each connected component of $\tau$ is called a \emph{twist region}. Additionally, neighborhoods of single crossings of $D$ which are not incident to any bigon are also considered twist regions. Hence, every crossing of $D$ is contained in some twist region of $D$. The total number of twist regions in $D$ is the \emph{twist number of $D$} and is denoted \emph{$t(D)$}. The \emph{twist number of a link $L$} is the minimum value of $t(D)$ over all diagrams of $L$ and is denoted \emph{$t(L)$}.

Define a \emph{sub twist region} of a diagram $D$ to be either an open regular neighborhood of exactly one crossing of $D$ or a connected open regular neighborhood of some collection of bigons for $D$. See Figure \ref{fig: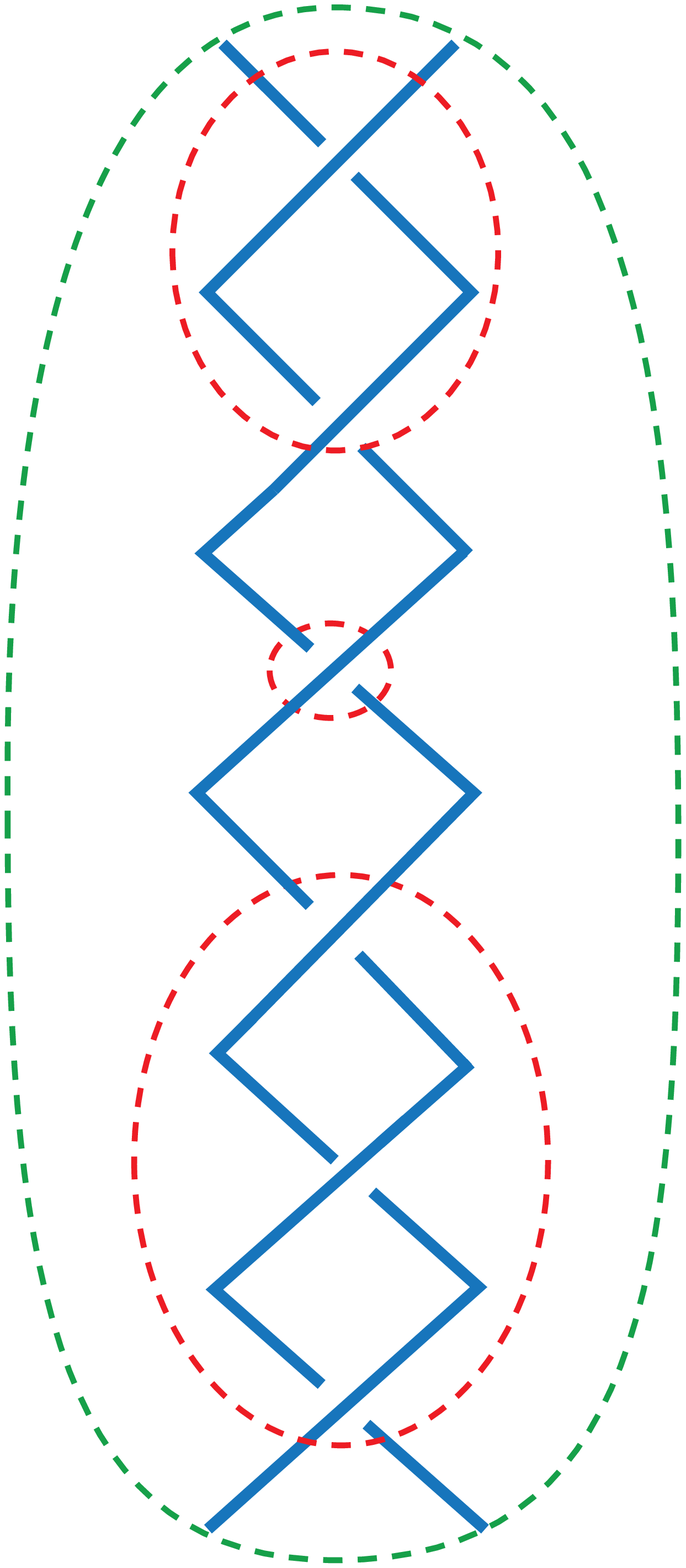}.

\begin{figure}[h]
\centering \scalebox{.15}{\includegraphics{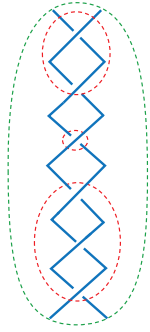}}
\caption{An example of three disjoint sub twist regions contained in a common twist region.}\label{fig:SubTwist.eps}
\end{figure}

A link $K$ in $S^3$ is \emph{hyperbolic} if $S^3-K$ has a complete Riemannian metric of constant sectional curvature equal to $-1$. Due to Thurston's foundational work on hyperbolic $3$-manifolds, it is known that every non-split link in $S^3$ fits into exactly one of three mutually disjoint categories:

\begin{enumerate}
\item hyperbolic links

\item satellite links

\item torus links

\end{enumerate}

%This classification can be taken to be a kind of definition of hyperbolic links. Namely, a hyperbolic link is any link that is non-split, not a satellite link, not a torus link and not the unknot. By Mostow rigidity, the exterior of every hyperbolic link $K$ has a well-defined finite \emph{hyperbolic volume}, denoted $vol(K)$.

%\begin{figure}[h]
%\centering \scalebox{.2}{\includegraphics{Figure8.eps}}
%\caption{An example of a diagram of an alternating, hyperbolic knot.}\label{fig:Figure8.eps}
%\end{figure}

The complement of every hyperbolic link $K$ in $S^3$ has a well-defined hyperbolic volume, denoted $vol(S^3\setminus K)$. Significant effort has been made to develop combinatorial criteria for a link to be hyperbolic. The following result of Menasco shows that most alternating links are hyperbolic.

\begin{theorem}\label{Menasco}\cite{Menasco}
If $L$ is a non-split, prime, alternating link which is not a $(2,q)$ torus
link, then $L$ is hyperbolic.
\end{theorem}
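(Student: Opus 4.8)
The plan is to deduce hyperbolicity from the trichotomy stated just above: since $L$ is assumed non-split, it lies in exactly one of the three disjoint classes, so it suffices to show that $L$ is neither a torus link nor a satellite link. In terms of the complement $M = S^3 \setminus L$, ruling out the satellite case amounts to proving that $M$ is \emph{atoroidal}, i.e. that it contains no incompressible torus which fails to be boundary parallel, since any satellite link carries an essential companion torus. Note first that irreducibility of $M$ is automatic: a non-split link complement contains no sphere separating $L$, and any other sphere bounds a ball on the side disjoint from $L$, so $M$ is irreducible and the language of incompressible surfaces applies.

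Disposing of the torus-link case is short. The only prime alternating torus links are the $(2,q)$ torus links; this follows from Murasugi's classification of alternating torus knots and links, or from the fact that a reduced alternating diagram realizes the crossing number (Kauffman--Murasugi--Thistlethwaite), which is incompatible with the standard presentation of $T(p,q)$ once $\min(p,q) \geq 3$. Since $L$ is alternating and, by hypothesis, not a $(2,q)$ torus link, $L$ is not a torus link at all.

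The substantive part is ruling out satellites by showing $M$ is atoroidal, and here I would set up Menasco's standard position. Isotope $L$ into a neighborhood of the sphere of projection $S$, pushing over-strands slightly to one side $S_+$ and under-strands to the other side $S_-$, and placing a small ball (a \emph{bubble}) at each crossing inside which the two strands cross. Given a putative essential torus $T$, arrange it to be incompressible and meridionally incompressible and put it in normal form relative to this configuration: the intersection $T \cap S$ is a finite union of simple closed curves, each meeting the equator of a bubble only in standard saddle arcs, with all inessential intersection circles removed by isotopy. The alternating hypothesis is precisely what forces the saddles at each bubble into a rigid local form, so that the curves of $T \cap S$ are organized combinatorially by the diagram $D$.

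The main obstacle is the combinatorial case analysis that converts this normal form into a contradiction. One cuts $T$ along $S_+$ and $S_-$ and examines innermost intersection curves: an innermost curve bounds a disk on $T$ lying entirely above or below $S$, and tracing this disk through the adjacent bubbles, the alternating condition forces it to bound a disk in $S$ meeting $D$ in at most one crossing. Such a disk either compresses $T$, contradicting incompressibility, or produces a simple closed curve in $S$ meeting $D$ in two points that violates primeness of the diagram. Eliminating all curves of $T \cap S$ in this way pushes $T$ off every bubble and into a product region between $S_+$ and $S_-$, forcing $T$ to be boundary parallel --- the desired contradiction. The delicate points, and exactly where the alternating and prime hypotheses are indispensable, are (i) that the normal form can be achieved after the compressions permitted by incompressibility and meridional incompressibility, and (ii) that the innermost configurations are genuinely obstructed, since a non-alternating crossing or a non-prime curve would admit precisely the essential torus one is trying to exclude. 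With $M$ shown to be atoroidal and hence non-satellite, and the torus-link case already excluded, the trichotomy yields that $L$ is hyperbolic.
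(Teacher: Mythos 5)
The first thing to say is that the paper does not prove this statement: it is Menasco's theorem, quoted with a citation to \cite{Menasco} and used downstream purely as a black box (together with Theorem \ref{prime}) to certify that the augmented links built in Lemma \ref{upperbound} are hyperbolic. So there is no ``paper's own proof'' to compare against; the only fair benchmark is Menasco's original argument, and at the level of strategy your outline does reconstruct it: irreducibility from non-splitness, elimination of the torus-link case by the classification of alternating torus links, and elimination of the satellite case by placing a candidate essential torus in normal position with respect to the bubbled projection sphere and running innermost-curve arguments that use both the alternating and the primeness hypotheses.

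As a proof, however, the proposal has a genuine gap, which you partly concede: the step you label ``the main obstacle'' --- the combinatorial analysis of the curves of $T \cap S_{\pm}$ and of the saddles in the bubbles --- \emph{is} the theorem. It occupies essentially all of \cite{Menasco}, and your text describes its intended conclusion rather than carrying it out, so nothing has been proved beyond the framing. Two further points would need repair even in an expanded write-up. First, the endgame is misstated: once $T$ has been isotoped off the projection sphere and all bubbles, it lies in a ball disjoint from $L$, and the contradiction is with \emph{incompressibility} --- an incompressible two-sided torus in the irreducible manifold $S^3 \setminus L$ is $\pi_1$-injective, hence cannot lie in a simply connected region --- not that $T$ ``becomes boundary parallel.'' Second, atoroidality alone does not yield hyperbolicity for a manifold with torus boundary; one must also exclude Seifert fibered complements (equivalently, essential annuli), and that is exactly where the $(2,q)$ torus links enter Menasco's argument as genuine exceptions. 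You outsource this to the hyperbolic/satellite/torus trichotomy stated in the paper, but for links (unlike knots) that trichotomy is correct only if ``torus link'' is read broadly enough to include all links with Seifert fibered complement: for instance, the union of a trefoil with the core circle of its defining solid torus is non-split, prime, has atoroidal non-hyperbolic complement, and is not a torus link in the usual sense. Consequently your exclusion step, which cites the classification of \emph{alternating torus links}, does not cover everything the trichotomy requires of it; the $(2,q)$ hypothesis must do work inside the surface analysis itself, not merely in a classification appeal.
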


The following theorem shows that every knot can be converted to an alternating link by adding an unknotted component.

\begin{theorem}\label{BlairMain}\cite{Blair1}
Given any connected diagram of a non-alternating link, we can augment
the diagram by adding a single unknotted component so that the resulting link
diagram is alternating.
\end{theorem}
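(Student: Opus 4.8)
The plan is to add the new component $U$ directly in the sphere of projection $S$, as the projection of a circle that crosses $D$ transversally and whose only crossings are with $D$; making its projection bound a disk in $S$ guarantees $U$ is unknotted. Observe first that a diagram is alternating exactly when every edge is alternating, so the whole problem is local to edges. The key local move is this: if $U$ crosses a single non-alternating edge $e$ once, and we declare $U$ to pass over (resp. under) the strand of $D$ when $e$ is labeled $++$ (resp. $--$), then $e$ is cut into two edges each of which is alternating. Dually, crossing an already-alternating edge twice with the two new crossings of opposite over/under type leaves both resulting edges alternating. Thus it suffices to route $U$ so that its projection meets every non-alternating edge an odd number of times and every alternating edge an even number of times, and then to check that the forced over/under data also makes $U$ itself alternating.

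First I would establish the combinatorial input that makes such a routing possible. Model $D$ as a $4$-valent graph in $S$ and record the over/under information as a label in $\{o,u\}$ on each dart (edge-end); at every crossing the two strands force the cyclic label pattern $o,u,o,u$ around the vertex, so opposite darts agree and adjacent darts differ. A short mod-$2$ computation then shows that for any closed walk the number of non-alternating edges it traverses has the same parity as the number of vertices at which the walk passes straight through to the opposite dart. Applying this to the boundary of a face—where the walk turns to an adjacent dart at every corner and so never goes straight—shows that each face of $D$ is bounded by an even number of non-alternating edges. Since the face boundaries span the cycle space of the connected planar graph $D$ over $\mathbb{F}_2$, the non-alternating edges meet every cycle evenly; equivalently, they form a coboundary. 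Hence there is a $2$-coloring of the crossings of $D$ whose bichromatic edges are exactly the non-alternating edges.

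With this $2$-coloring in hand, I would take the projection of $U$ to be a curve in $S$ that separates the two color classes, so that it automatically crosses each non-alternating edge an odd number of times and each alternating edge an even number of times. Invoking the same turning/parity bookkeeping used above, the over/under choices dictated by the local fix are exactly the ones needed for consecutive crossings along $U$ to alternate, so $U$ becomes an alternating component and the augmented diagram $D \cup U$ is alternating. Connectedness of $D$ is what should allow the separating curve to be organized into a single closed curve rather than several.

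The main obstacle is precisely this last realization step. A single embedded circle cuts $S$ into only two disks, so when the two color classes are interleaved no embedded circle can separate them, and one is forced to let the projection of $U$ be immersed. The difficulty is then to reconcile three requirements at once—that $U$ be a single \emph{unknotted} component, that the global diagram $D \cup U$ (including the self-crossings of $U$) be alternating, and that the parity of $U$ with each edge be as prescribed—since introducing self-crossings to connect the pieces can both knot $U$ and destroy alternation. I expect the resolution to be an explicit insertion algorithm that threads one unknotted arc through all the non-alternating edges, using connectedness of $D$ to splice the local fixes together while keeping every new self-crossing nugatory and consistently labeled; carrying this out carefully, rather than the parity argument above, is where the real work lies.
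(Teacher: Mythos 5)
Your parity argument is sound as far as it goes: the count showing that each face of $D$ meets an even number of non-alternating edges is correct, the conclusion that the non-alternating edges form a coboundary (hence are dual to a disjoint family of embedded circles in the sphere of projection) is correct, and the local crossing rules you state are the right ones. But what this construction produces is an augmentation of $D$ by an \emph{unlink} --- a disjoint collection of simple closed curves --- which is exactly the preliminary result the paper isolates as Lemma \ref{BlairUnlink} (proved on page 68 of \cite{Blair1}). The theorem you were asked to prove requires a \emph{single} unknotted component, and your proposal explicitly stops short of that: your final paragraph names the passage from an unlink to one component as ``where the real work lies'' and only conjectures that some insertion algorithm will accomplish it. That passage is not a routine detail to be deferred; it is the actual content of the theorem beyond Lemma \ref{BlairUnlink}, and it is where both \cite{Blair1} and the present paper (in the proof of Lemma \ref{upperbound}) invest most of their effort.

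Moreover, the route you sketch for closing the gap --- letting the projection of $U$ be immersed and arranging its self-crossings to be nugatory and consistently labeled --- diverges from the known solution and takes on difficulties the actual proof never faces, since the augmenting component is never allowed to cross itself. Instead one merges the unlink components two at a time: Type II moves (Lemma \ref{TypeII}) push one circle $C_i$ across edges of $D$, two new crossings at a time with signs chosen to preserve alternation, until it shares a region with another circle $C_j$; then a Type I move (Lemma \ref{TypeI}) connect-sums the two circles into a single embedded circle, again preserving alternation. Iterating leaves one augmenting component whose projection is a \emph{simple} closed curve in the sphere of projection, so it bounds a disk there and unknottedness is automatic --- no analysis of self-crossings is needed. (Controlling this merging process, via an intersection-minimizing arc $\mu$ and surgery arguments, is precisely the delicate part; see the proof of Lemma \ref{upperbound}.) In summary, your first two paragraphs give a clean, arguably more conceptual derivation of the unlink step than the inductive argument in \cite{Blair1}, but the theorem itself is left unproved in your proposal.
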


In \cite{Blair1}, Theorem \ref{Menasco} and Theorem \ref{BlairMain} were combined to show the following corollary.

\begin{corollary}\label{LastCorBlair} \cite{Blair1} Every link complement $S^3\setminus K$ contains an unknot $U$ such that $K\cup U$ is a hyperbolic link.

\end{corollary}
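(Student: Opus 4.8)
The plan is to combine the two preceding theorems and then check the remaining hypotheses of Menasco's theorem. First I would start with a connected diagram $D$ of $K$. If $D$ happens to be alternating, a single type II Reidemeister move between two strands produces a connected, \emph{non}-alternating diagram of the same link, so without loss of generality I may take $D$ to be a connected, non-alternating diagram. Applying Theorem~\ref{BlairMain} to $D$ then yields a connected alternating diagram $D'$ of a link of the form $K\cup U$, where $U$ is the single unknotted component supplied by the augmentation algorithm. This already exhibits an alternating link containing $K$ as a component; the work that remains is to arrange $D'$ so that the three hypotheses of Theorem~\ref{Menasco} — non-split, prime, and not a $(2,q)$ torus link — all hold.

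Next I would argue that $K\cup U$ is non-split and prime by passing through the standard diagrammatic facts about alternating links: a connected alternating diagram always represents a non-split link, and a connected, reduced, prime alternating diagram represents a prime link. Accordingly I would first make $D'$ reduced (eliminating any nugatory crossing by a flype) and prime in the diagrammatic sense (no simple closed curve in $S$ meets $D'$ in two non-vertex points with crossings on both sides). If such a curve did exist, $D'$ would display a connect sum, and I would either preprocess $D$ to be prime before augmenting or restrict attention to the prime summand carrying $K$. I expect this to be the main obstacle: the augmentation in Theorem~\ref{BlairMain} could in principle introduce a nugatory crossing or a connect-sum circle, so the crux is to verify — or to guarantee via a preprocessing/cleanup step — that $D'$ is simultaneously connected, reduced, and prime while remaining alternating and keeping $U$ unknotted.

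Finally I would rule out the exceptional case of Theorem~\ref{Menasco}. The $(2,q)$ torus links are rigid: each of their components is an unknot, and a $(2,q)$ torus link has at most two components. Hence if $K$ is a nontrivial knot, then $K\cup U$ has a knotted component and cannot be a $(2,q)$ torus link, and if $K$ has two or more components then $K\cup U$ has at least three components and again cannot be of this type. The only degenerate situation is $K$ an unknot, which I would handle directly by choosing a sufficiently complicated initial diagram of $K$ so that $D'$ carries crossings not confined to a single twist region, forcing $K\cup U$ away from the $(2,q)$ family. With $K\cup U$ established as non-split, prime, alternating, and not a $(2,q)$ torus link, Theorem~\ref{Menasco} yields that $K\cup U$ is hyperbolic, and since $U$ is unknotted this is precisely the asserted conclusion.
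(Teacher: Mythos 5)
Your skeleton is the right one, and it is the same as the paper's: augment a connected non-alternating diagram via Theorem~\ref{BlairMain}, then verify the hypotheses of Theorem~\ref{Menasco} by way of the diagrammatic criteria of Theorem~\ref{prime}. However, the step you yourself flag as ``the main obstacle'' is a genuine gap, and neither of your proposed escapes closes it. You cannot repair a non-prime augmented diagram by a ``cleanup'' step: if the augmented link $K\cup U$ were in fact composite, then no diagram of it is simultaneously connected, reduced, prime and alternating (by Theorem~\ref{prime}), the link is not hyperbolic, and the proof collapses; so what must be ruled out is compositeness of the \emph{link}, not an unfortunate feature of one diagram. Nor can you ``restrict attention to the prime summand carrying $K$'': discarding a summand changes the link, and the summand containing $K$ need not contain the augmenting unknot $U$, so the statement being proved would no longer be about an augmentation of $K$. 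What is actually needed --- and what the paper supplies in the proof of Lemma~\ref{upperbound} --- is an argument that once $D$ is preprocessed to be connected, reduced, prime and \emph{non-alternating}, the resulting alternating diagram $G$ is automatically prime, provided the augmenting component projects to a simple closed curve $A$ (a fact which itself comes from the proof in \cite{Blair1}, not from the statement of Theorem~\ref{BlairMain}). The mechanism is a parity argument: a decomposing circle $\beta$ for $G$ cannot meet $G$ in two points of $A$ (since $A$ has no self-crossings and $D$ is connected and disjoint from $\beta$, one side of $\beta$ would contain no crossing at all), so $\beta$ meets $G$ in two points of $D$; then $A$ lies in one of the two disks bounded by $\beta$, primeness of $D$ forces that disk to contain no crossing of $D$, hence $A$ meets at most one edge of $D$; every edge of $D$ missed by $A$ is an edge of the alternating diagram $G$ and therefore alternating, so $D$ has at most one non-alternating edge; and since every link diagram has an even number of non-alternating edges, $D$ would be alternating, a contradiction. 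Without this argument (or a substitute for it), your proof is incomplete at its central point.

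A secondary, smaller gap is the exclusion of the $(2,q)$ torus links when $K$ is the unknot. Your component-count argument (all components of a $(2,q)$ torus link are unknots, and there are at most two of them) is clean and disposes of every other case, but for the unknot your proposal to ``choose a sufficiently complicated initial diagram'' is not an argument. The paper's handling is uniform and diagrammatic: once $G$ is known to be connected, reduced, prime and alternating, if $K\cup U$ were a $(2,q)$ torus link then $G$ would have to be the \emph{standard} $(2,q)$ diagram, in which each component projects to a simple closed curve with no self-crossings; this contradicts the choice of $D$ as non-alternating, since a non-alternating diagram must contain at least one self-crossing of $K$. That argument costs nothing extra and covers the unknot case as well.
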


Moreover, the proof of Corollary \ref{LastCorBlair} demonstrates the slightly stronger result that for every link $K$ in $S^3$ there is a diagram $D$ of $K$ and an unknot $U$ in $S^3$ such that $U$ projects to a simple closed curve in the sphere of projection for $D$, $K\cup U$ is hyperbolic and $K\cup U$ has an alternating diagram in the sphere of projection for $D$. The link $K\cup U$ is called an \emph{alternating augmentation} of $D$. The unknot $U$ is called the \emph{augmenting component} of $K\cup U$. The diagram of an alternating augmentation will be denoted $G$ and will always denote the diagram of $K\cup U$ achieved by projecting $K\cup U$ onto the sphere of projection for $D$. Hence, $D$ is a subset of $G$ and the closure of $G\setminus D$ is a simple closed curve $A$ in the sphere of projection. Note that $A$ is the projection of $U$. See Figure \ref{fig:nonaltturnedalt.eps}.

\begin{figure}[ht]
\labellist \small\hair 2pt
\pinlabel {$D$} [b] at 140 -20
\pinlabel {$G$} [b] at 450 -20
\pinlabel {$A$} [b] at 390 250
%\pinlabel {$B^*$} [b] at 30 15
%\pinlabel {$\Theta$} [b] at 340 205
\endlabellist
\includegraphics[scale=.5]{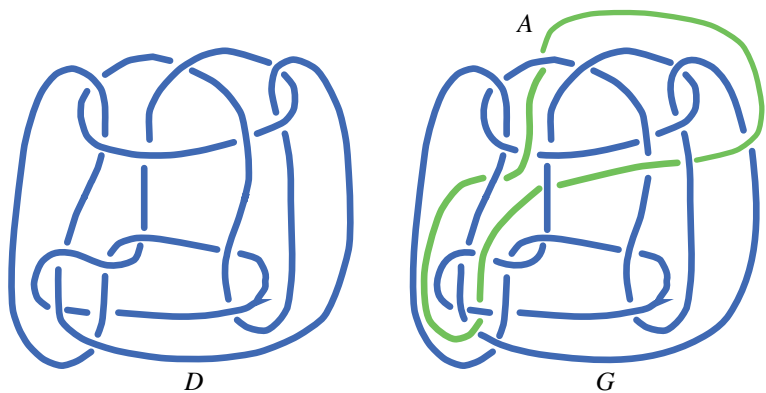}
\caption{Making a non-alternating knot into an alternating link}\label{fig:nonaltturnedalt.eps}
\end{figure}

%\begin{figure}[h]
%\centering \scalebox{.5}{\includegraphics{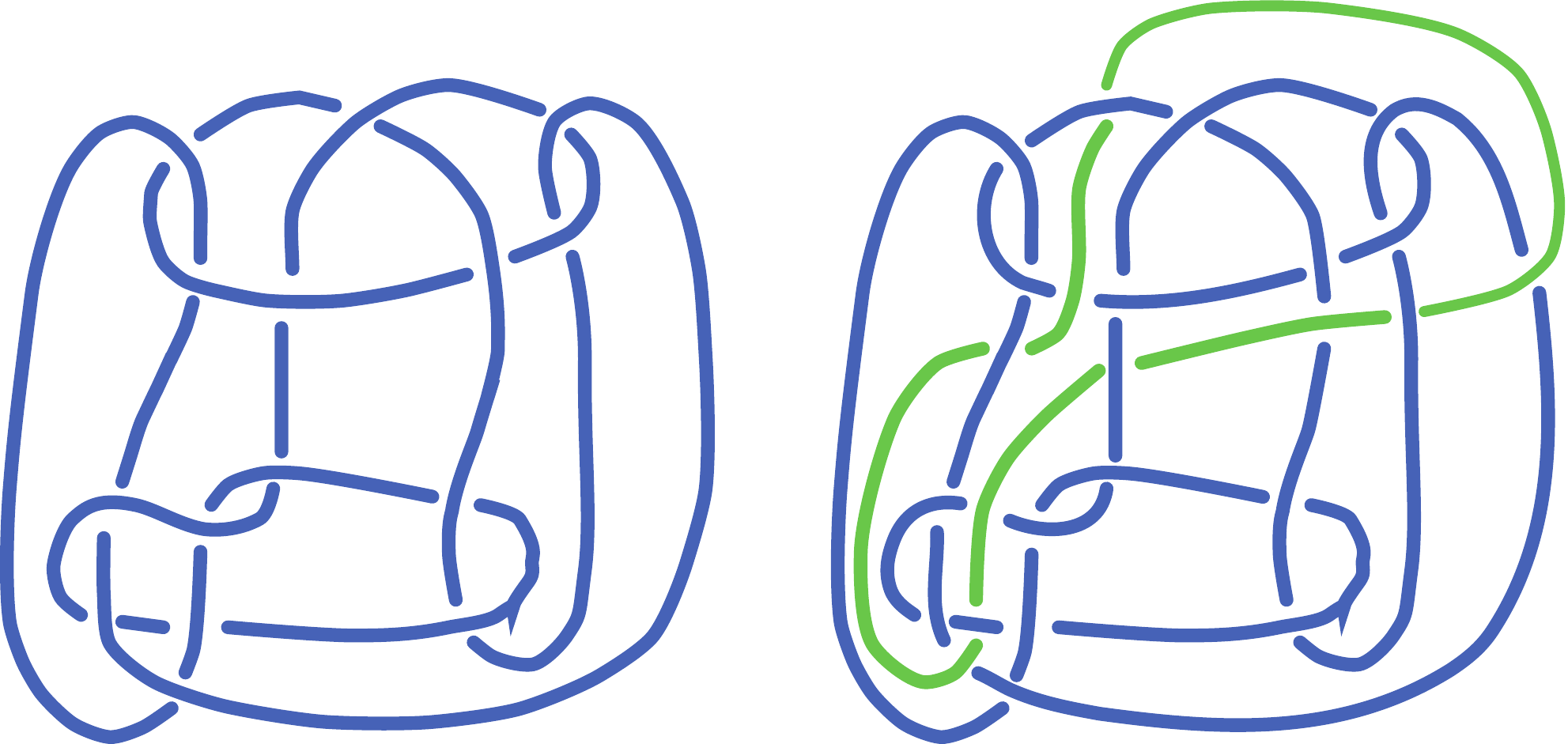}}
%\caption{Making a non-alternating knot into an alternating link}\label{fig:nonaltturnedalt.eps}
%\end{figure}

We are now able to define the \emph{alternating volume} of a knot $K$, denoted $AltVol(K)$. Given a knot $K$, the alternating volume of $K$ is the infimum of the hyperbolic volumes of all hyperbolic alternating augmentations of $K$. By J\o{}rgensen and Thurston, we know that every non-empty set of hyperbolic volumes attains its infimum. Hence, the infimum in the definition of alternating volume can be replaced with a minimum. Thus,

$$AltVol(K)=\mathop{min}_{U\subset S^3}\{vol(S^3\setminus(K\cup U))\}.$$ where the above minimum is taken over all augmenting components $U$ such that the alternating augmentation $K\cup U$ is hyperbolic.

Given a hyperbolic knot $K$ and any hyperbolic alternating augmentation of $K$, $K\cup U$, then Thurston's hyperbolic Dehn surgery theorem implies that $vol(S^3\setminus(K\cup U))>vol(S^3\setminus K)$. Hence, if $K$ is a hyperbolic knot, it is always true that $AltVol(K)>vol(S^3\setminus K)$.

Agol showed that the Whitehead link complement and the $(-2,3,8)$ pretzel link complement are the minimal volume orientable hyperbolic $3$-manifolds with two cusps \cite{Agol}. Each of these manifolds has hyperbolic volume $4G$ where $G$ is Catalan's constant. Moreover, the standard diagram of the Whitehead link is an alternating augmentation of an unknot diagram. Similarly, the standard diagram of the $(-2,3,8)$ pretzel link is an alternating augmentation of a trefoil diagram. Thus, the alternating volume of both the unknot the trefoil is $4G$. Computing the alternating volume for other knot types will likely be a challenging problem.

The main goal of this article is to relate the alternating volume of a knot to the twist number of a knot in a manner similar to the following theorem due to  Lackenby\cite{Lackenby} with improvements on the theorem due to Agol and D. Thurston.

\begin{theorem}\label{Lackenby}\cite{Lackenby}
Let $V_3$ be the volume of a regular ideal hyperbolic tetrahedron. Let $D$ be a
prime, alternating diagram for a hyperbolic link $L$. Then,

$$V_3(t(D)-2)\leq vol(S^3\setminus L)\leq 10V_3(t(D)-1)$$
\end{theorem}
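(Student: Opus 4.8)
The plan is to establish the two inequalities by genuinely different mechanisms: the right-hand bound by augmenting the diagram and controlling volume under Dehn filling, and the left-hand bound by cutting the complement along essential spanning surfaces and estimating the volume carried by its guts.

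For the upper bound, I would first pass to the fully augmented link. Around each of the $t(D)$ twist regions of $D$, insert an unknotted crossing circle encircling the two strands of the region, and then remove every full twist from each region. Call the resulting link $L'$. Removing twists changes the link only by a homeomorphism of the exterior supported near the crossing circles, and the original exterior $S^3\setminus L$ is recovered from $S^3\setminus L'$ by Dehn filling each crossing-circle cusp along the slope that reinstates the twisting. By Thurston's hyperbolic Dehn surgery theorem, filling a cusp strictly decreases hyperbolic volume, so
$$vol(S^3\setminus L)<vol(S^3\setminus L').$$
It then suffices to bound the augmented volume. The structural input is that the fully augmented link complement is cut by the projection sphere into two isometric, totally geodesic ideal polyhedra whose combinatorics are read directly off the twist-reduced diagram. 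I would triangulate this polyhedral decomposition, show that each twist region contributes a uniformly bounded number of ideal tetrahedra (at most ten, after accounting for the single global $-1$ coming from the connectivity of the diagram on the sphere), and then use that every ideal hyperbolic tetrahedron has volume at most $V_3$ to conclude $vol(S^3\setminus L')\le 10V_3(t(D)-1)$. Combined with the filling inequality this gives the right-hand bound.

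For the lower bound I would work with the two checkerboard surfaces $B$ and $W$ of $D$, formed by checkerboard-coloring the regions of $S^2\setminus D$ and attaching a half-twisted band at each crossing. The first step is to prove that, because $D$ is prime, reduced, and alternating, each checkerboard surface is incompressible and boundary-incompressible in $S^3\setminus L$; this is exactly where primeness and the alternating hypothesis enter, and it follows from Menasco's analysis of the complements of alternating diagrams. Cutting $M=S^3\setminus L$ along $B\cup W$ and passing to the characteristic submanifold, I would isolate the guts $\Gamma$, the part of the cut-open manifold that is neither an $I$-bundle nor a solid-torus piece and hence carries all of the negative curvature. The combinatorial core is then a lower bound $-\chi(\Gamma)\ge t(D)-2$: each twist region not fully absorbed into the characteristic $I$-bundle forces a definite amount of negative Euler characteristic, and the alternating structure prevents too much of the complement from being parabolic or product. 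Feeding this into the volume--guts inequality of Agol and of Agol--Storm--Thurston---which bounds $vol(M)$ from below by a fixed positive multiple of $|\chi(\Gamma)|$---and tracking the constants as refined by Agol and Thurston yields $vol(S^3\setminus L)\ge V_3(t(D)-2)$.

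The hard part will be the lower bound, and within it the guts estimate. Translating a purely combinatorial count of twist regions into a bound on $-\chi(\Gamma)$ requires a careful understanding of how the stack of bigons in each twist region sits inside the characteristic $I$-bundle after cutting along both checkerboard surfaces, together with a proof that the guts are not consumed by parabolic annuli or product regions. The reliance on the deep volume--guts inequality is a further substantial input. By comparison the upper bound is comparatively routine once the augmented-link polyhedral decomposition and the tetrahedron count are in hand; the only care needed there is the bookkeeping that produces the constant $10$ and the $-1$ in $10V_3(t(D)-1)$.
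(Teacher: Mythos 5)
This theorem is not proved in the paper at all: it is imported as a black box from \cite{Lackenby}, with constants reflecting later improvements, and is used as such in Section 3. So the only meaningful comparison is with the published proof, and there your outline reproduces the genuine architecture correctly: the upper bound is indeed proved (in the appendix to \cite{Lackenby}, by Agol and D.~Thurston) by passing to the fully augmented link, exhibiting an ideal triangulation of its complement with at most $10(t(D)-1)$ tetrahedra, straightening so that each tetrahedron has volume at most $V_3$, and invoking strict decrease of volume under the Dehn fillings that reinstate the twists; the lower bound is indeed proved by showing the checkerboard surfaces of the prime alternating diagram are essential, estimating the Euler characteristic of the guts of the cut-open manifold, and applying a volume--guts inequality. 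You also correctly identify the guts estimate as the hard part.

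Two points in your sketch would fail if executed as written. First, your claimed combinatorial estimate $-\chi(\Gamma)\geq t(D)-2$ is stronger than what the literature establishes: Lackenby's estimate for the guts of the checkerboard-cut manifold is of size roughly $(t(D)-2)/2$, and this estimate is the crux of the entire lower bound, not a step one can assert in a sentence. Second, the constants do not assemble the way you claim. Lackenby's own route (his guts estimate fed into Agol's 1999 volume--guts inequality) yields only $V_3(t(D)-2)/2$; the bound $V_3(t(D)-2)$ as stated here requires the later Agol--Storm--Thurston inequality $vol(M)\geq v_8\,|\chi(\mathrm{guts}(M\setminus S))|$, where $v_8$ is the volume of the regular ideal octahedron and $v_8/2>V_3$, an inequality resting on Perelman's work. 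That improvement is due to Agol, Storm, and W.~Thurston, not to the appendix authors Agol and D.~Thurston, whom you credit with refining the lower-bound constants (a conflation the paper's own attribution invites, but a conflation nonetheless). Since your plan explicitly promises to ``track the constants,'' the gap between $(t-2)/2$ and $t-2$, and between the inequality you would need and the one that exists, is precisely where the proposal is incomplete; the rest is a faithful, if schematic, account of the known proof.
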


A portion of the proof of the main theorem will be devoted to proving generalizations of the results in \cite{Blair1} by carefully controlling how the diagram of an augmenting component intersects the diagram of the original knot. We use the remainder of this section to record results from that paper that we will use to control the projection of the augmenting component. Although we make an effort to keep this paper self-contained, there are two instances where we use results from \cite{Blair1} which follow from the proofs but not the statement of the theorems presented there. These instances include the definition of alternating augmentation and Remark 1. In both these cases, we provide references so that the interested reader can verify our claims.

In \cite{Blair1} the second author defines \emph{Type I} and \emph{Type II} moves which are local moves preformed on link diagrams. These moves have the potential to change the link type, but will always preserve the fact that a link diagram is alternating. The Type I move is depicted in Figure \ref{fig: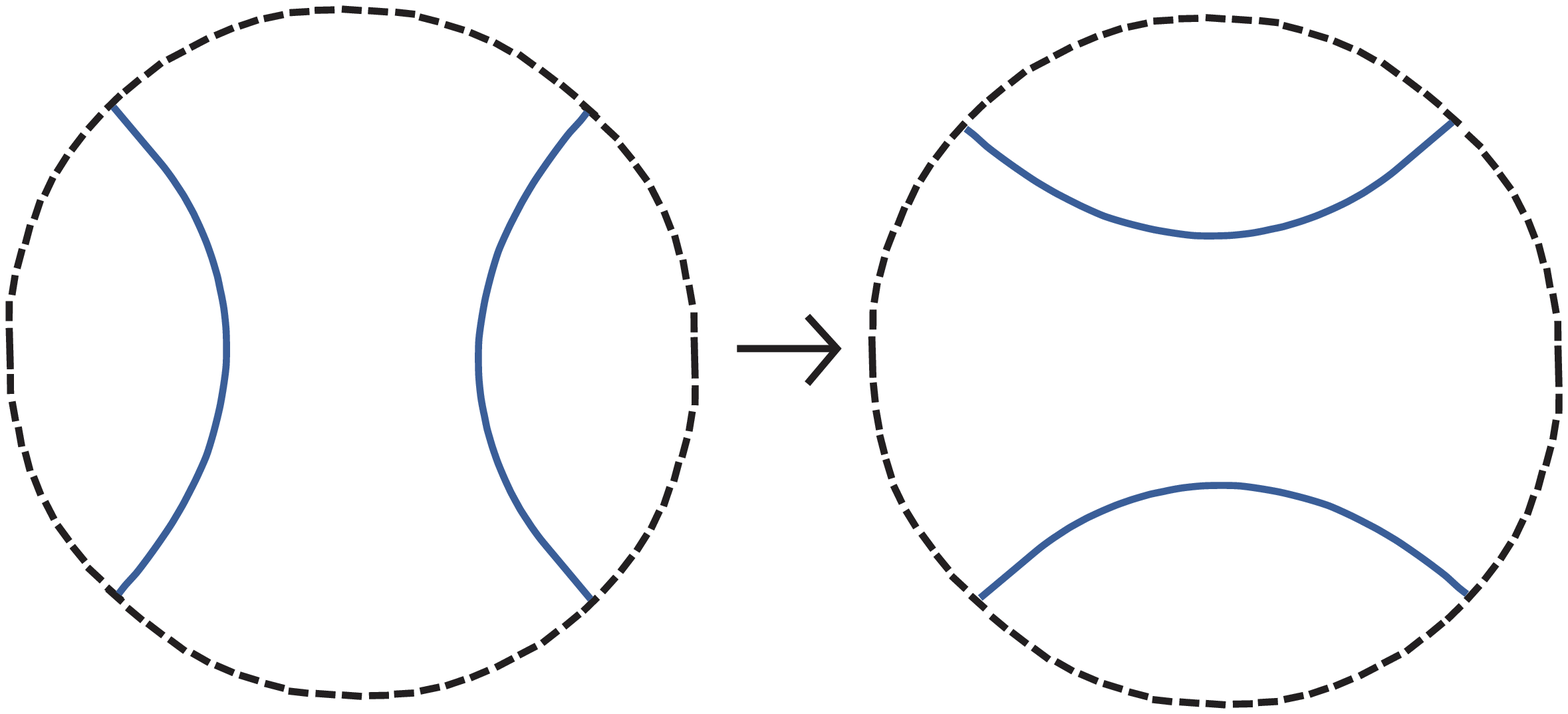} and the Type II move is depicted in Figure \ref{fig:R1Move.eps}.

\begin{lemma}\label{TypeI}\cite{Blair1}
Given an alternating connected diagram of a link, a Type I move
results in an alternating link diagram.
\end{lemma}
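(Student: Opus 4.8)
The plan is to verify the alternating condition directly from the edge-labeling convention of Figure \ref{fig:4-valentgraph.eps}, exploiting the fact that a Type I move is supported in a disk $\Delta$ in the sphere of projection and leaves the diagram completely unchanged outside $\Delta$. Under this convention a diagram is alternating precisely when every edge carries one plus label and one minus label; because this is a condition imposed independently on each individual edge, it suffices to check only those edges that meet $\Delta$. The connectedness hypothesis guarantees that we are performing the move on a genuine single link diagram (so that the local picture glues correctly to the rest), but the alternating verification itself is entirely local.

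First I would partition the relevant edges into two classes: the \emph{interior edges}, which lie entirely inside $\Delta$ and are created or modified by the move, and the \emph{boundary edges}, which cross $\partial\Delta$ and are therefore only partially altered. For the interior edges, I would read the labels directly off the output tangle depicted in Figure \ref{fig:Type1Move.eps} and confirm by inspection that each such edge receives exactly one plus and one minus label, so that every interior edge is alternating.

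The crux of the argument is the boundary edges, and this is where I expect the only real subtlety to lie. For each edge crossing $\partial\Delta$, the portion lying outside $\Delta$ — together with the label it carries there — is fixed by the move, so I would compare the label assigned on the inside of $\Delta$ after the move against that unchanged outside label. The essential point is that the Type I move is designed so that the inside end-label of each boundary edge agrees with the inside end-label carried by the \emph{input} tangle along $\partial\Delta$; since the input diagram is alternating, the outside label was already opposite to that input inside-label, and hence it remains opposite to the new inside-label. This shows every boundary edge still carries one plus and one minus. The main obstacle is precisely this boundary bookkeeping: one must confirm that the move preserves, rather than flips, the signs along $\partial\Delta$, since a sign flip there is the only mechanism by which the alternating property could be destroyed. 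Once both the interior and the boundary edges are verified, every edge of the output diagram is alternating, and because all structure outside $\Delta$ (including its connectivity) is untouched, the result is an alternating link diagram, completing the proof.
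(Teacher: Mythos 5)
Your proposal has a genuine gap, and it sits exactly where you claim the problem is easiest: in the assertion that ``the alternating verification itself is entirely local'' and that connectedness only ensures the local picture ``glues correctly to the rest.'' (Note first that this paper gives no proof of the lemma at all --- it is quoted from \cite{Blair1} --- so your argument must be measured against what the statement actually requires.) A Type I move joins two \emph{disjoint} strands that face a common region: the input tangle inside the disk $\Delta$ is a pair of parallel arcs with \emph{no} crossings, so under the convention of Figure \ref{fig:4-valentgraph.eps}, where labels live only at crossings, a boundary edge of $\Delta$ has no ``inside end-label'' before the move; all of its labels come from crossings outside $\Delta$. After the move each of these edges terminates at a new crossing inside $\Delta$, and one must check that the signs of the new crossings can be chosen compatibly with all of the outside labels at once. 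A direct computation shows this is possible if and only if the outside labels of the two strands satisfy a definite relation (with the band drawn so the strands are joined on matching sides, the labels at those matching ends must be opposite); no choice of the number of half-twists in the band or of over/under at the new crossings evades this constraint. That relation does \emph{not} follow from each edge being alternating separately --- the two strands belong to different edges whose labelings are a priori independent --- so no purely local inspection of Figure \ref{fig:Type1Move.eps} can prove the lemma, and your claim that the move ``preserves the inside end-labels'' has no content in this situation.

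The missing idea is global, and it is precisely where connectedness enters. Because the diagram is a connected $4$-valent graph in $S^2$, every region is a disk, so the two strands being joined lie on a \emph{single} boundary circuit of the common region $R$. Traverse $\partial R$: along each edge the two end-labels are opposite (alternation), and at each corner of $R$ the two incident edge-ends have opposite labels (the four edge-ends at a crossing alternate over/under cyclically). Chaining these two facts around the circuit shows that every edge of $\partial R$ carries the same label at its traversal-initial end, and this is exactly the compatibility relation needed to sign the new crossings consistently. That is the substantive content of the lemma. Indeed, without connectedness the statement is false: in a split diagram whose components are an alternating trefoil and its mirror image, the region between the components has disconnected boundary, the compatibility relation can fail, and then no Type I move joining the two components produces an alternating diagram. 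Any correct proof must therefore use connectedness in an essential, non-local way, which your proposal does not.
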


\begin{figure}[h]
\centering \scalebox{.3}{\includegraphics{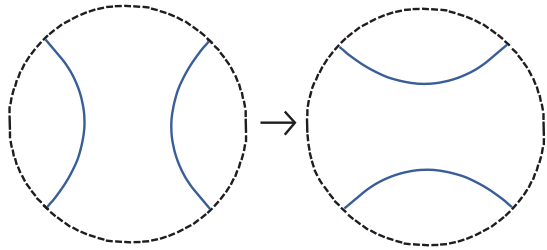}}
\caption{A Type I move.}\label{fig:Type1Move.eps}
\end{figure}

\begin{lemma}\label{TypeII}\cite{Blair1}
Given an alternating connected diagram of a link, a Type II move
(after choosing the signs of the new crossings) results in an alternating link
diagram.
\end{lemma}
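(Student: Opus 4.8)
The plan is to prove the lemma by a purely local analysis of the edge-labeling, recasting the existence of a valid sign assignment as the solvability of a linear system over $\mathbb{F}_2$. Recall the labeling convention: at every crossing the two ends of the overstrand receive one sign and the two ends of the understrand the other, so that an edge is \emph{alternating} precisely when its two ends carry opposite signs, and the diagram is alternating precisely when every edge is alternating. A Type II move is supported in a disk $\Delta$ in the sphere of projection; outside $\Delta$ the diagram, and hence all edge-labels on the portions of edges lying outside $\Delta$, are unchanged. Thus it suffices to choose the signs of the finitely many new crossings inside $\Delta$ so that (i) every edge lying entirely inside $\Delta$ is alternating and (ii) every edge meeting $\partial\Delta$ remains alternating, i.e. its interior end-label is opposite to its (already fixed) exterior end-label.

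First I would record the key algebraic reformulation. Assign to each new crossing $v$ inside $\Delta$ a variable $x_v\in\mathbb{F}_2$ recording which pair of opposite ends is the overstrand. Since the four ends around a crossing are cyclically over, under, over, under, flipping $x_v$ exchanges the sign at every end incident to $v$. Consequently, for each edge $e$ joining an end at $u$ to an end at $w$, the alternating requirement at $e$ becomes an affine equation $x_u+x_w=c_e$ for a constant $c_e\in\mathbb{F}_2$ determined by the positions of the two ends; for an edge meeting $\partial\Delta$, whose exterior end is pinned by the unchanged outside diagram, this degenerates to a single equation $x_u=c_e$. A sign assignment making the move alternating is exactly a solution of this system, and the phrase ``after choosing the signs of the new crossings'' is precisely the assertion that the system is consistent.

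Next I would establish consistency. When the new crossings and the edges among them form a forest, the system is triangular and trivially solvable: traversing the new strand and fixing each successive crossing to repair the edge just crossed never produces a conflict. The only way consistency can fail is around a cycle of new edges, where summing the equations telescopes the left-hand side to zero and forces the constants $c_e$ to sum to zero; this parity condition is the crux of the argument. For the Type II move I would verify it directly from Figure \ref{fig:R1Move.eps}: the move introduces a controlled, bounded configuration of new crossings, and enumerating the finitely many admissible labelings of the edges meeting $\partial\Delta$ --- admissible because the pre-move diagram is alternating --- one checks in each case that the relevant cycle sum vanishes, so a solution exists. The same bookkeeping, with the signs forced rather than free, reproves Lemma \ref{TypeI}.

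The step I expect to be the main obstacle is precisely this cycle-parity verification: ruling out a configuration in which going around a loop of new edges forces some edge to be non-alternating. Establishing that no such obstruction occurs --- equivalently, that the local $\mathbb{F}_2$-system inherited from an alternating exterior is always consistent --- is where the specific geometry of the Type II move in Figure \ref{fig:R1Move.eps} is essential, and it is what distinguishes a move that preserves alternation from an arbitrary local modification that would not.
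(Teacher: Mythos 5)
First, a point of comparison: this paper never proves Lemma \ref{TypeII} at all --- it is imported from \cite{Blair1} --- so your proposal has to be judged against what a correct proof requires rather than against an argument in this text. Your $\mathbb{F}_2$ reformulation is sound bookkeeping, and it correctly isolates the crux: consistency of the affine system, i.e.\ vanishing of a parity sum around the one cycle the move creates. The genuine gap is that you never establish this, and the method you propose for establishing it cannot work. You plan to enumerate the labelings of the strand-ends entering $\Delta$ that are ``admissible because the pre-move diagram is alternating.'' But alternation of the exterior is a condition on each edge separately, and it does not cut the boundary labelings down enough: there are labelings of the four ends entering $\Delta$ in which every exterior edge is alternating and yet your system is inconsistent. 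Concretely, in the Type II (finger) move the three sub-edges of the crossed edge $e$ force $e$'s over/under status at both new crossings, hence also force the status of the pushed strand $f$ there; these forced values are compatible with $f$'s own pinned end labels only if the ends of $e$ and of $f$ lying on the same side of the region the finger passes through carry \emph{opposite} labels. If they carry equal labels, both strands are forced to be the understrand at one of the two new crossings, which is impossible, so no choice of signs works. An honest local enumeration would encounter these bad cases and could not eliminate them.

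What is missing is the global fact that rules the bad labelings out, and it is exactly where the connectedness hypothesis --- which your proposal never uses --- enters. In a connected diagram every region is a disk, and in an alternating diagram the edge-end labels strictly alternate $+,-,+,-,\dots$ around the boundary walk of any region: they are opposite at the two ends of each edge because the diagram is alternating, and opposite at each corner because the four ends at a crossing alternate over/under cyclically. Since each edge contributes a consecutive pair of ends to the walk, the ends of $e$ and of $f$ on the same side of their common region are forced to have opposite labels, which is precisely the consistency condition above. Without connectedness the lemma is false: place an alternating trefoil diagram next to its mirror image and push a strand of one across an edge of the other; for one of the two relative checkerboard parities, no assignment of signs at the two new crossings leaves the diagram alternating. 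So the skeleton of your argument is fine, but the ``cycle-parity verification'' is not a finite local check over locally admissible boundary data --- it is a statement about how alternation organizes labels around a region of a connected diagram, and that is the content you would still need to supply.
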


\begin{figure}[h]
\centering \scalebox{.3}{\includegraphics{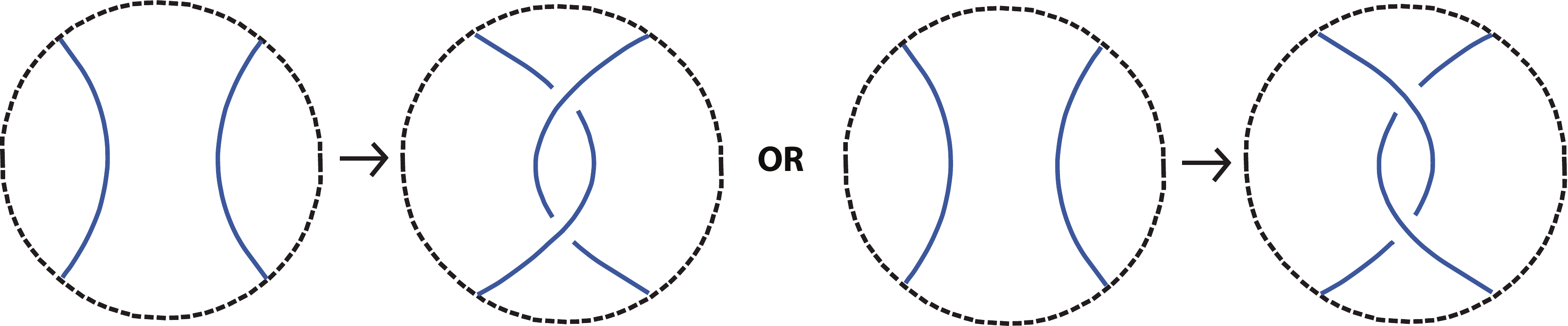}}
\caption{A Type II move.}\label{fig:R1Move.eps}
\end{figure}

\begin{lemma}\label{BlairUnlink}\cite{Blair1}
Any non-alternating connected link diagram can be augmented via an unlink such that it becomes alternating.
\end{lemma}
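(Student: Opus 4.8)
The quickest route is to observe that a single unknotted component is, by definition, a one--component unlink, so Theorem \ref{BlairMain} already delivers the statement verbatim. I would nonetheless prefer to record the direct construction, since the multicomponent augmentation is what one actually wants in order to localize each correction, and it makes the unknottedness and mutual unlinkedness of the augmenting circles transparent.

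The plan is to read off the over/under data along the link and to attack the non-alternating edges in pairs. Traversing $D$, record at each crossing whether the strand passes over ($O$) or under ($U$); a non-alternating edge is precisely an edge whose two endpoints carry the same symbol, so it is of type $OO$ (a ``$++$'' edge) or of type $UU$ (a ``$--$'' edge). Because every crossing is met exactly once as an over--strand and once as an under--strand during a full traversal of all components, the total number of $O$'s equals the total number of $U$'s; a short cyclic counting argument on the maximal blocks of consecutive equal symbols then shows that the number of $++$ edges equals the number of $--$ edges. I would pair each $++$ edge with a $--$ edge.

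For each such pair I would route a simple closed curve $\gamma_i$ in the sphere of projection from the $++$ edge to the $--$ edge, using connectedness of $D$ to guarantee that such a curve exists, and arrange $\gamma_i$ to pass \emph{over} the strand at the $++$ edge and \emph{under} the strand at the $--$ edge. A single transverse crossing on a defect edge flips its parity: the $++$ edge $[c_1,c_2]$ with labels $O,O$ is split into $[c_1,p]$ and $[p,c_2]$ with labels $O,U$ and $U,O$, both alternating, and symmetrically at the $--$ edge. The opposite choices at the two ends of $\gamma_i$ make the two arcs of $\gamma_i$ itself alternating as well. Any unavoidable crossings of $\gamma_i$ with already--alternating edges are introduced in canceling pairs and absorbed by the same local sign choice, which is exactly a Type II move; Lemmas \ref{TypeI} and \ref{TypeII} then certify that the surrounding diagram stays alternating. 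Finally I would push the $\gamma_i$ to be pairwise disjoint in the projection sphere, so that they bound pairwise disjoint disks and their lifts to $S^3$ constitute an unlink.

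I expect the genuine difficulty to be the routing and the cyclic consistency bookkeeping: when a $++$ edge and its partner $--$ edge do not bound a common region, the connecting curve must cross other edges, and one must verify that the over/under labels can be chosen to alternate cyclically around each $\gamma_i$ while simultaneously fixing the two target edges and leaving every traversed alternating edge alternating. This is precisely the combinatorial content encoded by the Type I and Type II moves of \cite{Blair1}, and it is the step where the hypothesis that $D$ is connected is essential.
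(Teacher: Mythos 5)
Your opening observation is formally sound inside this paper: both Theorem \ref{BlairMain} and Lemma \ref{BlairUnlink} are quoted from \cite{Blair1}, and a single unknotted circle is indeed a one-component unlink, so the implication holds. But note that this inverts the logical order of the source: in \cite{Blair1} the unlink lemma is the basic construction (proved first, on page 68) and Theorem \ref{BlairMain} is \emph{deduced from it} by merging the unlink components with Type I and Type II moves, so as a reading of the literature your ``quickest route'' is backwards. More importantly, the present paper never uses the bare statement of Lemma \ref{BlairUnlink}; it uses the specific construction behind it, recorded in Remark \ref{UnlinkRemark}: the unlink projects to \emph{disjoint} simple closed curves meeting every non-alternating edge of $D$ exactly once and otherwise disjoint from $D$. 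That property is what drives the proof of Lemma \ref{upperbound}, and it cannot be recovered from Theorem \ref{BlairMain} used as a black box, since the single component produced there crosses $D$ in a far less controlled way.

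The direct construction you sketch has a genuine parity gap at the pairing step. Your $\gamma_i$ is a closed curve crossing its chosen $++$ edge once, its chosen $--$ edge once, and every other edge of $D$ an even number of times (``canceling pairs''). But a closed curve in the sphere of projection, transverse to $D$ and missing its vertices, must cross the boundary of every region of $D$ an even number of times, since it enters and exits each region equally often. Applying this to the two regions adjacent to the $++$ edge forces the partner $--$ edge to be adjacent to those same regions; an arbitrary pairing of $++$ edges with $--$ edges has no reason to satisfy this, so in general the curve $\gamma_i$ you want simply does not exist. Nor can this be repaired by letting $\gamma_i$ cross other non-alternating edges evenly: an edge labelled $++$ that is crossed an even number of times always retains a non-alternating sub-edge no matter how the new crossings are chosen (the labels along it are forced to alternate $U,O,U,\dots$, and with an even count the final piece is again $++$), just as a single crossing of an alternating edge unavoidably creates a non-alternating sub-edge --- which is exactly why Type II moves come in pairs. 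The proof in \cite{Blair1} avoids pairing altogether: one first shows that around every region of $D$ the non-alternating edge incidences are even in number and, moreover, alternate in type $++, --, ++, \dots$ as one traverses the boundary of the region; connecting consecutive non-alternating edges inside each region by disjoint embedded arcs then assembles into disjoint simple closed curves crossing each non-alternating edge exactly once, with types alternating along each curve, so the over/under data can be chosen to make the whole diagram alternating. Each such curve typically passes through many non-alternating edges, not two: your global count $\#(++)=\#(--)$ is true, but the relevant parity is local to the regions, not global.
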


\begin{remark}\label{UnlinkRemark}
The proof of Lemma \ref{BlairUnlink} appears on page 68 of \cite{Blair1} and some of the details of that proof will become relevant later in this paper. In particular, the proof of Lemma \ref{BlairUnlink} implies that given any connected, non-alternating link diagram $D$ for a link $K$, it is possible to find an unlink $U$ in $S^3$ such that $U$ projects to a collection of disjoint simple closed curves in the sphere of projection for $D$ and the projection of $U$ intersects every non-alternating edge of $D$ exactly once, but is otherwise disjoint from $D$. Moreover, the resulting diagram of $K\cup U$ is alternating.
\end{remark}

%Let $D$ be a connected, non-alternating link projection in $S^2$. Let $A$ be a simple closed curve in the plane of projection which is disjoint from the vertices of $D$ and meets the edges of $D$ transversely. Augment $D$ with $A$ in such a way that the resulting link diagram, $G$, is alternating and the resulting link is hyperbolic. This is possible by corollary 5 of "Alternating Augmentations of Links". Let $K$ be a knot in $S^3$ with projection $D$. Let $K_A$ be an unknot in $S^3$ that augments $K$. Let $L$ be a link in $S^3$ defined to be $K\cup K_A$.

\section{Initial Inequalities}

\begin{lemma}\label{lowerbound}

Let $D$ be any diagram of a knot and let $G$ be the diagram of an alternating augmentation of $D$, then $t(D) \leq t(G)$.

\end{lemma}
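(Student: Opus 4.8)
The plan is to locate the entire argument in the bigon structure of the two diagrams, since twist regions are built out of bigons. Write $A$ for the closure of $G \setminus D$, the simple closed curve in the sphere of projection onto which the augmenting component $U$ projects. Because $A$ is embedded, every crossing of $G$ is either a \emph{self-crossing} of $D$ (one already present in $D$) or an intersection point of $A$ with $D$; there are no crossings of $A$ with itself. By the construction of an alternating augmentation together with Remark \ref{UnlinkRemark}, $A$ meets each non-alternating edge of $D$ exactly once and is otherwise disjoint from $D$, so in particular each edge of $D$ carries at most one crossing with $A$. These two facts are the only features of the augmentation I will use.

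The crux is the claim that \emph{no bigon of $G$ is incident to a crossing on $A$}; equivalently, the bigon regions of $G$ are exactly the bigon regions of $D$ that are disjoint from $A$, and adding $A$ creates no new bigons. I would prove this by looking locally at a crossing $a$ where $A$ meets an edge $e$ of $D$ transversally. The four edges of $G$ leaving $a$ alternate, in cyclic order, between the two arcs of $A$ and the two halves of $e$, so the two edges bounding any bigon with a corner at $a$ must be one arc of $A$ and one half of $e$. But the arc of $A$ ends at another point of $A \cap D$, whereas the half of $e$ ends at a self-crossing of $D$ --- it cannot reach a second crossing with $A$, precisely because $e$ meets $A$ only once. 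Hence these two edges cannot meet again at a common second corner, and no such bigon closes up. Consequently every bigon of $G$ has both corners at self-crossings of $D$, and its two bounding edges are undivided edges of $D$ bounding the same region there. I expect this ruling-out of new bigons to be the main obstacle, and the place to be most careful is in verifying that the two candidate bounding edges are genuinely forced into these two incompatible types.

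Granting the claim, the twist numbers compare by a short counting argument. The bigons of $G$ form a subset of the bigons of $D$ and all of their corners are self-crossings of $D$, so any chain of bigons of $G$ connecting two self-crossings is simultaneously a chain of bigons of $D$; therefore two self-crossings lying in one twist region of $G$ already lie in one twist region of $D$. Assigning to each twist region of $G$ that contains a self-crossing the (now well-defined) twist region of $D$ holding those self-crossings yields a map from a subset of the twist regions of $G$ onto the twist regions of $D$, which is onto because every self-crossing lies in a twist region of each diagram. A surjection forces $t(D) \leq t(G)$. Intuitively, passing from $D$ to $G$ can only delete bigons and hence only split twist regions apart, never merge them, which is exactly why the count cannot decrease. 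I would close by dispatching the degenerate pieces --- an isolated self-crossing of $D$ incident to no bigon remains isolated in $G$, and any twist region of $G$ built solely from intersections with $A$ only increases $t(G)$ --- so neither affects the inequality.
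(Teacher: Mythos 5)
There is a genuine gap, and it sits exactly where you placed your confidence: the assumption that ``$A$ meets each non-alternating edge of $D$ exactly once and is otherwise disjoint from $D$.'' That property comes from Remark \ref{UnlinkRemark}, which describes one particular construction (the unlink augmentation of Lemma \ref{BlairUnlink}); it is \emph{not} part of the definition of an alternating augmentation. Lemma \ref{lowerbound} must hold for an arbitrary alternating augmentation --- the only hypotheses available are that $D\subset G$, that $cl(G\setminus D)=A$ is an embedded simple closed curve, and that $G$ is alternating --- and indeed the Main Theorem applies the lemma to a volume-minimizing augmentation $G$, about which nothing beyond the definition is known. (Even the single-component augmentation built in Lemma \ref{upperbound} only satisfies ``$A$ meets each edge of $D$ at most \emph{twice},'' not once, since the Type I/II moves double up intersections.) For a general augmentation your key claim is simply false: $A$ may cross a single edge $e$ of $D$ twice in succession, clasping it, and the arc of $A$ together with the sub-arc of $e$ between the two crossings can bound a bigon of $G$ whose corners are both $A$--$D$ crossings; this is fully compatible with $G$ being alternating. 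So your local argument at a crossing $a\in A\cap D$ (``the half of $e$ must end at a self-crossing of $D$'') collapses, and with it the claim that every bigon of $G$ is a bigon of $D$.

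The good news is that your counting at the end does not need the full strength of ``no new bigons''; it only needs that no bigon of $G$ \emph{mixes} corner types, i.e.\ has one corner at a crossing of $D$ and the other at a crossing of $A$ with $D$. That weaker statement is true for every augmentation and has a one-line proof: each edge of $G$ lies entirely in $D$ or entirely in $A$ (edges contain no vertices), and at a mixed bigon one of the two bounding edges would have to lie in $D$ at one corner and in $A$ at the other. This is, in essence, the paper's argument: a twist region of $G$ is traversed by exactly two strands of $K\cup U$, so if it contains even one crossing of $D$, both strands lie in $K$ and the whole twist region is a sub twist region of $D$. From there the paper runs the same surjection/refinement count you do (twist regions of $G$ meeting crossings of $D$ map onto twist regions of $D$, and the leftover twist regions of $G$ built from $A$-crossings only help the inequality). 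If you replace your global ``no new bigons'' claim with the mixed-bigon exclusion, your proof becomes correct and coincides with the paper's.
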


\begin{proof}

%Let $B_2$ be the union of all bigons of $G$. Let $B$ be an open regular neighborhood of $B_2$ in $S^2$, the sphere of projection for $D$. Let $C_I$ be an open regular neighborhood of the crossings of $G$ that are not contained in $B$. A twist region of $G$ is a connected component of $C_I \cup B$. Remove $A$ from $G$ and consider $C_I \cup B$ as a collection of disjoint planar surfaces in $S^2$.

%Note that every twist region of $G$ is contained in exactly one of $C_I$ or $B$.

%Suppose that $T$ is a twist region of $G$ in $B_D \cap C_I$. Then $T$ contains exactly one crossing of $D$ and $T$ is a sub twist region of $D$.

Since $D$ is a subset of $G$, every crossing of $D$ is a crossing of $G$. Let $\tau_D$ be the set of twist regions of $G$ that contain at least one crossing of $D$. Suppose that $T\in \tau_D$. The portion of $G$ in $T$ is the projection of two sub arcs of $K\cup A$. Moreover, both of these arcs must be contained in $K$ since $T$ contains a crossing of $D$. Hence, $T$ is a sub twist region of $D$. Thus, all elements of $\tau_D$ are sub twist regions of $D$.

%Since $A$ projects to a simple closed curve in $S^2$, no twist region of $G$ is disjoint from $D$. Now we examine the twist regions of $G$.

%Let $t_D=|B_D|$ be the number of twist regions of $G$ that meet crossings of $D$ and let $t_A=|C_I\cup B|-|B_D|$ be the number of twist regions of $G$ that do not meet crossings of $D$.  As previously demonstrated, each element of $B_D$ is a sub twist region of $D$.

Let $P$ be the partition of the set of crossings of $D$ corresponding to the twist regions of $D$. Let $P'$ be the partition of the set of crossings of $D$ corresponding to $\tau_D$. To show $P'$ is a refinement of $P$, we will show that each of the following must hold:

\begin{enumerate}

\item Distinct elements of $P'$ intersect trivially.

\item For all $x \in P'$ there exists $y \in P$ such that $x \subset y$.

\item  $\bigcup_{x \in P'}x$ is the set of all crossings of $D$.

\end{enumerate}

By definition, all of the twist regions of $G$ are pairwise disjoint. Thus, distinct elements of $P'$ intersect trivially.

As argued above, all twist regions in $\tau_D$ are sub twist regions of $D$. Thus, for all $x \in P'$ there exists a $y \in P$ such that $x \subset y$.

Since every crossing of $D$ is contained in some twist region in $\tau_D$, then $\bigcup_{x\in P'}x$ is the set of all crossings of $D$.

Therefore, $P'$ is a refinement of $P$.

Since $P'$ is a refinement of $P$, then $|P|\leq|P'|$. Since $t(D)=|P|$ and $|\tau_D|=|P'|$, then

$$t(D)\leq |\tau_D| \leq t(G).$$

\end{proof}

%\begin{lemma}\label{disjfromregion}

%M doesn't pass through twist region of D (awaiting approval)

%\end{lemma}

%

%\begin{proof}

%\end{proof}

%\begin{lemma}

%The induction lemma. (write this up!)

%\end{lemma}

%\begin{proof}

%\end{proof}

The following results will be need in the proof of Lemma \ref{upperbound}.

\begin{lemma}\label{anulus}
If $D$ is a connected, $R2$-reduced diagram of a link $K$ and $D$ contains a twist region that is not a disk, then $D$ is equivalent, via a planar isotopy, to the standard diagram of the $(2,n)$ torus link.

\end{lemma}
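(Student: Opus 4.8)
The plan is to read off the global structure of $D$ directly from the non-disk twist region on the sphere of projection $S$. A twist region is a connected regular neighborhood of a collection of bigons stacked end to end, so as a subsurface of $S$ it is a regular neighborhood of a ``necklace'' of disks joined at crossings; such a neighborhood deformation retracts either to an arc (linear chain, giving a disk) or to a single circle (cyclic chain, giving an annulus), and no other type can occur in the orientable surface $S$. Hence a twist region $T$ that is not a disk must be an annulus, and the bigons it contains form a closed necklace $B_1,\dots,B_n$ joined in cyclic order at crossings $c_1,\dots,c_n$. My first step is to fix this cyclic labeling.

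Next I would use the hypothesis that $D$ is $R2$-reduced to control the over/under data at the $c_i$. Each bigon $B_i$ meets $D$ in two edges and has two corner crossings; if the same strand passed over at both corners, the configuration of Figure \ref{fig:R2Move.eps} would appear, contradicting $R2$-reducedness. Thus in every $B_i$ the two strands alternate over and under, so the $c_i$ form a genuine twist and the two arcs running through $T$ wind monotonically about the core circle of the annulus. Consequently the part of $D$ lying in $T$ is exactly the standard closed $2$-braid on $n$ crossings.

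It then remains to rule out any part of $D$ outside $T$. The boundary $\partial T$ consists of two simple closed curves $\gamma_1,\gamma_2$, each bounding a complementary disk $E_1,E_2$ in $S$. The key point is that neither $\gamma_i$ meets $D$: near each crossing $c_j$ the curve $\gamma_i$ rounds the pinch point of the necklace while staying inside the complementary quadrant, and along each bigon edge it runs parallel to that edge on the outside, so it never crosses an edge of $D$. Hence each $\gamma_i$ lies in a single complementary region of $D$. Since every crossing $c_j$ lies in $T$ and therefore outside $E_i$, while $\gamma_i\cap D=\emptyset$, connectedness of $D$ forces the interior of each $E_i$ to be disjoint from $D$; otherwise $\gamma_i$ would separate $D$. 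Therefore $E_1$ and $E_2$ are the two complementary faces, $D\subset T$, and $D$ is precisely the closed $2$-braid with $n$ crossings together with its two side regions, i.e. the standard diagram of the $(2,n)$ torus link.

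The step I expect to be the main obstacle is the careful verification that $\partial T$ is disjoint from $D$. This requires analyzing the regular neighborhood of the pinched necklace near each crossing and confirming that its boundary arcs round the crossings inside the complementary quadrants rather than cutting across any of the four incident edges; special attention is needed to check that the two sides of the annulus never get interchanged as one traverses the necklace, which holds because $T$ is an orientable annulus in $S$. Once this local picture is in hand, the passage from ``$\gamma_i\cap D=\emptyset$'' to ``$E_i$ is a face'' is an immediate consequence of the connectedness of $D$, and the identification with the standard $(2,n)$ torus link diagram follows at once.
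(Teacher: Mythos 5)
Your argument for the generic case is essentially the paper's, but your opening structural claim --- that a non-disk twist region is a regular neighborhood of a linear or cyclic ``necklace'' of bigons, so that ``no other type can occur'' --- is exactly the point that requires proof, and as stated it is false. The formal definition of a twist region is a connected component of a regular neighborhood of \emph{all} bigon regions; nothing a priori prevents (i) a crossing from being incident to three or more bigon regions, or (ii) two bigons from sharing a common edge, and in either situation the union of bigons is not a necklace (its deformation retract is not a $1$-manifold). Case (i) actually occurs: in the standard diagram of the Hopf link all four regions are bigons, so the unique twist region is the \emph{entire sphere} of projection --- neither a disk nor an annulus --- and your subsequent argument (two boundary circles $\gamma_1,\gamma_2$ bounding complementary disks $E_1,E_2$) has nothing to latch onto, even though the hypothesis of the lemma is satisfied and its conclusion holds (the Hopf link is the $(2,2)$ torus link). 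Case (ii) (the configuration of Figure \ref{fig:3.eps}) produces a theta-shaped union of two bigons whose regular neighborhood is a disk but is not a linear chain; it must be ruled out before one may speak of a cyclic chain at all. Orientability of $S$, which you invoke, is irrelevant here: the obstruction is combinatorial (branching and edge-sharing), not the possibility of a M\"obius band.

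The paper's proof spends most of its length on precisely this case analysis: it shows that a crossing incident to three or more bigons forces $D$ to be the two-crossing Hopf link diagram (where the lemma holds directly), that two bigons sharing an edge force the ambient twist region to be a disk (contradicting the hypothesis), and only in the remaining generic case runs the necklace/annulus argument. Your handling of that generic case --- $R2$-reducedness forces the crossings to alternate, $\partial T$ is disjoint from $D$, and connectedness of $D$ then gives $D\subset T$, identifying $D$ with the standard closed $2$-braid diagram --- is correct and matches the paper; indeed, the step you flagged as the main obstacle (disjointness of $\partial T$ from $D$) goes through easily once the necklace structure is established, since then all four edges at each crossing are edges of bigons in $T$. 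The genuine missing piece is the justification of the necklace structure itself.
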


\begin{proof}
Let $D$ be a connected diagram of a link $K$. Recall that a twist region of $D$ is either the neighborhood of a crossing of $D$ that is not incident to a bigon region or it is a connected component of $\tau$ where $\tau$ is an open regular neighborhood of all bigon regions of $D$ in the sphere of projection.

Suppose $D$ contains a twist region $T$ that is not a disk. Since a neighborhood of a crossing of $D$ is a disk, then $T$ is a connected component of $\tau$. Suppose that $D$ has the property that there are at most two bigon regions incident to every crossing of $D$ and that no two bigon regions share a common edge. In this case, $T$ deformation retracts onto a connected, compact $1$-manifold $\alpha_T$ and $T$ is homeomorphic to an open neighborhood of $\alpha_T$. The 1-manifold $\alpha_T$ can be constructed by choosing a point in the interior of each bigon region an connecting pairs of points in adjacent bigon regions via an arc that is contained in the union of the two bigon regions. See Figure \ref{fig:1.eps}. If $\alpha_T$ is an arc, then $T$ is homeomorphic to a disk, a contradiction to how we chose $T$. Hence, $\alpha_T$ is a circle and $T$ is homeomorphic to an open annulus. However, if $T$ is an open annulus, then $D$ is the union of the boundaries of the bigon regions contained in $T$. Since $D$ is $R2$-reduced, then $D$ is alternating and is the standard diagram of the $(2,n)$ torus link.

\begin{figure}[ht]
\labellist \small\hair 2pt
\pinlabel {$T$} [b] at 210 175
\pinlabel {$\alpha_{T}$} [b] at 160 360
\pinlabel {$T$} [b] at 860 180
\pinlabel {$\alpha_{T}$} [b] at 825 378
\endlabellist
\includegraphics[scale=.25]{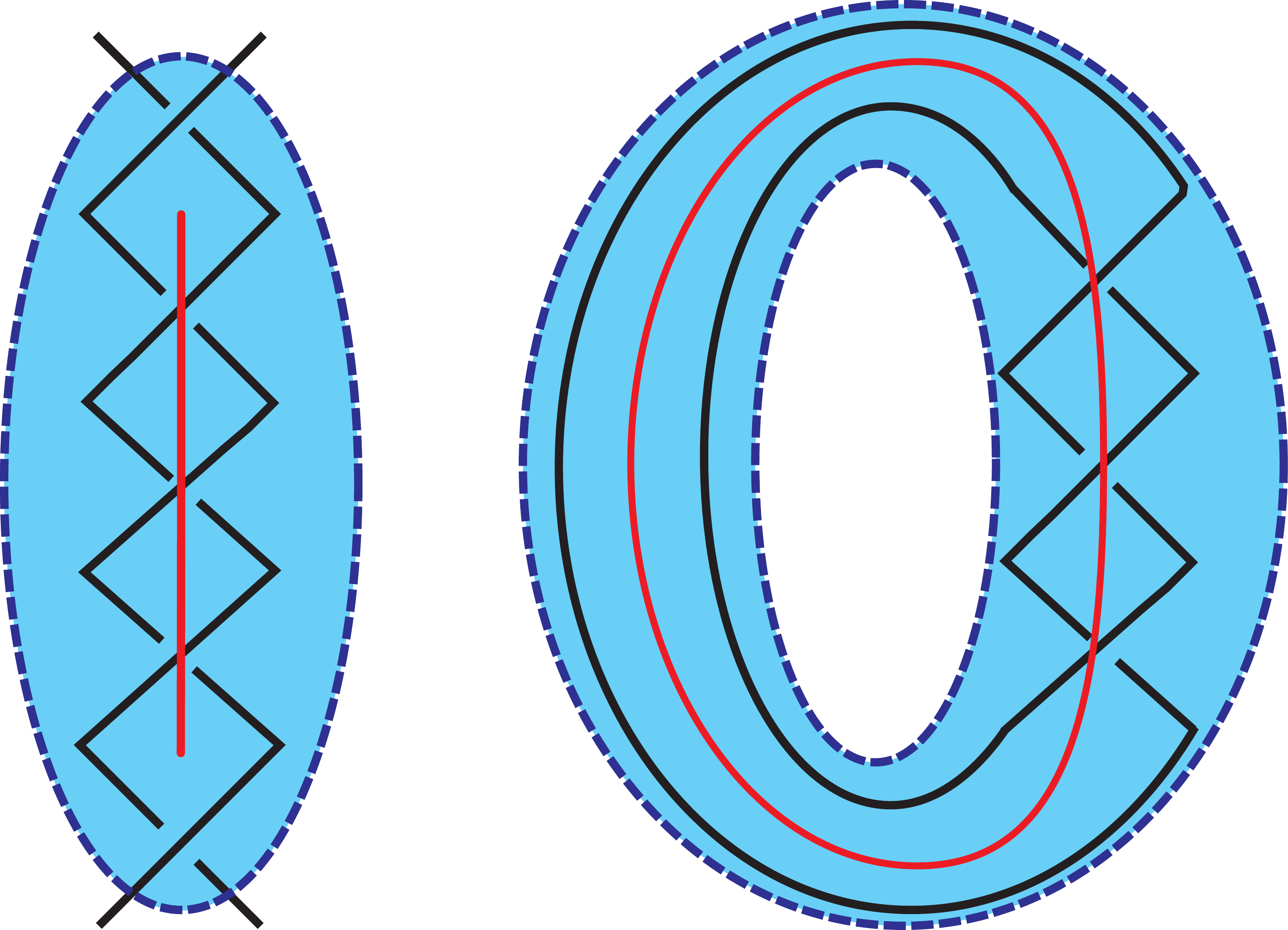}
\caption{In the left figure, $\alpha_T$ is an arc and $T$ is a disk. In the right figure, $\alpha_T$ is a circle, $T$ is an open annulus and $D$ is the standard diagram of the $(2,3)$ torus link.}\label{fig:1.eps}
\end{figure}

%\begin{figure}[h]
%\centering \scalebox{.25}{\includegraphics{1r.eps}}
%\caption{In the left figure, $\alpha_T$ is an arc and $T$ is a disk. %In the right figure, $\alpha_T$ is a circle, $T$ is an open annulus %and $D$ is the standard diagram of the $(2,3)$ torus link.}%\label{fig:1.eps}
%\end{figure}

We now consider the case when there is a crossing of $D$ which is incident to at least three distinct bigon regions. However, this can only occur when $D$ is a diagram with two vertices, four edges and four bigon regions. In this case $T$ is the entire sphere of projection. Since $D$ is $R2$-reduced, $D$ is alternating and is the standard diagram of the Hopf link. See Figure \ref{fig: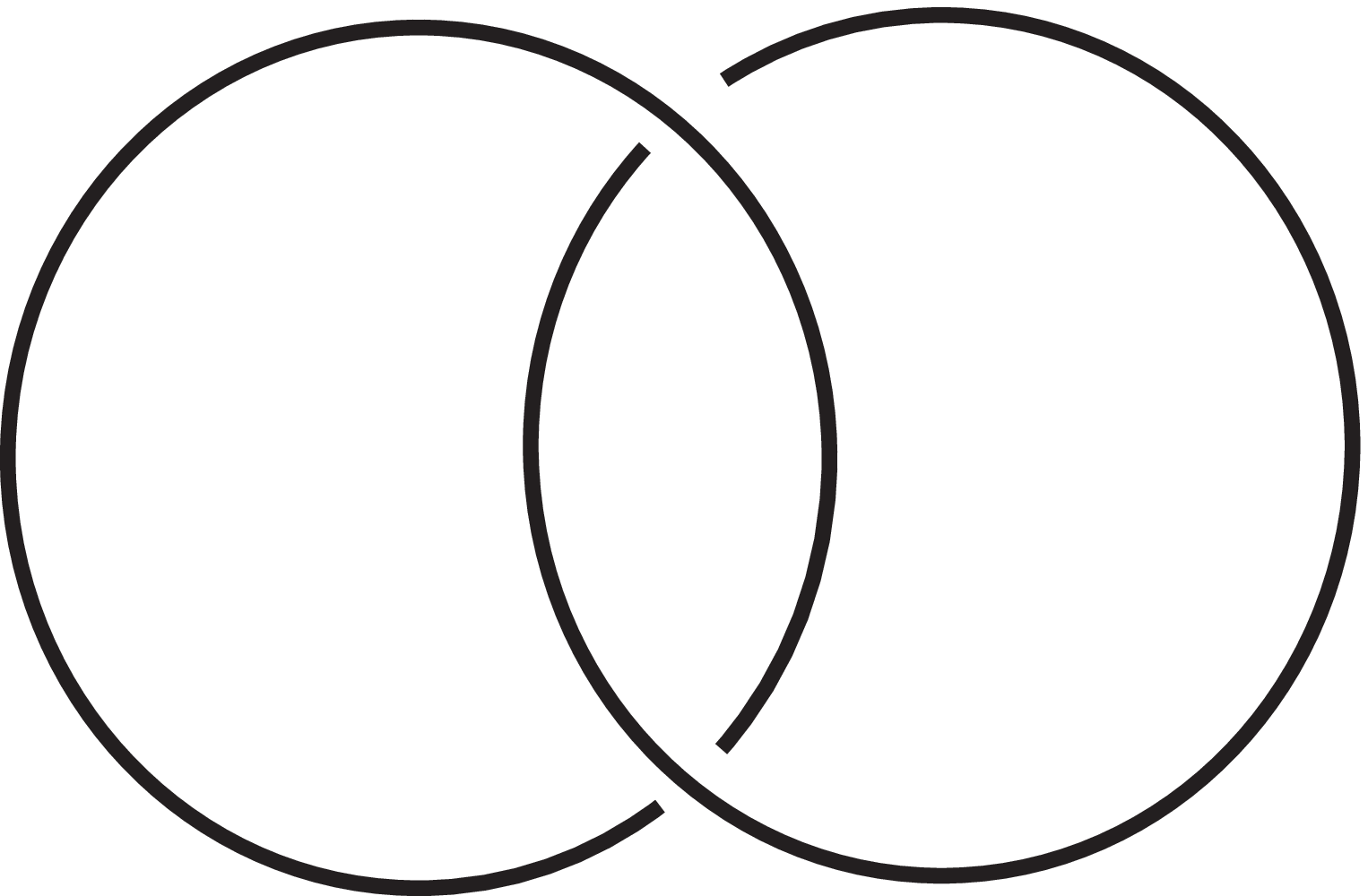}.

\begin{figure}[h]
\centering \scalebox{.25}{\includegraphics{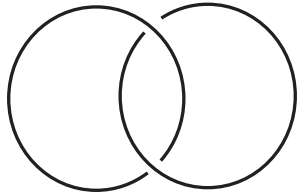}}
\caption{The standard diagram of the Hopf Link.}\label{fig:2.eps}
\end{figure}

Suppose that $D$ has the property that there are at most two bigon regions incident to every crossing of $D$ and that there exist two bigon regions $A$ and $B$ which share a common edge. See Figure \ref{fig:3.eps}. If $T$ is a twist region which is disjoint from all pairs of bigon regions that share a common edge, then the result follows from the previous arguments. With out loss of generality, suppose $T$ is not disjoint from $A\cup B$. Since $A$ and $B$ share an edge, then $T$ contains $A \cup B$. If $T$ contains a third bigon region, then one of the crossings contained in the boundary of $A$ and $B$ would be incident to three bigon regions, a contradiction to how we chose $D$. Thus, $T$ is an open neighborhood of $A\cup B$ which is a disk, a contradiction to how we chose $T$.
\end{proof}

\begin{figure}[ht]
\labellist \small\hair 2pt
\pinlabel {$B$} [b] at 300 230
\pinlabel {$A$} [b] at 380 300
\endlabellist
\includegraphics[scale=.25]{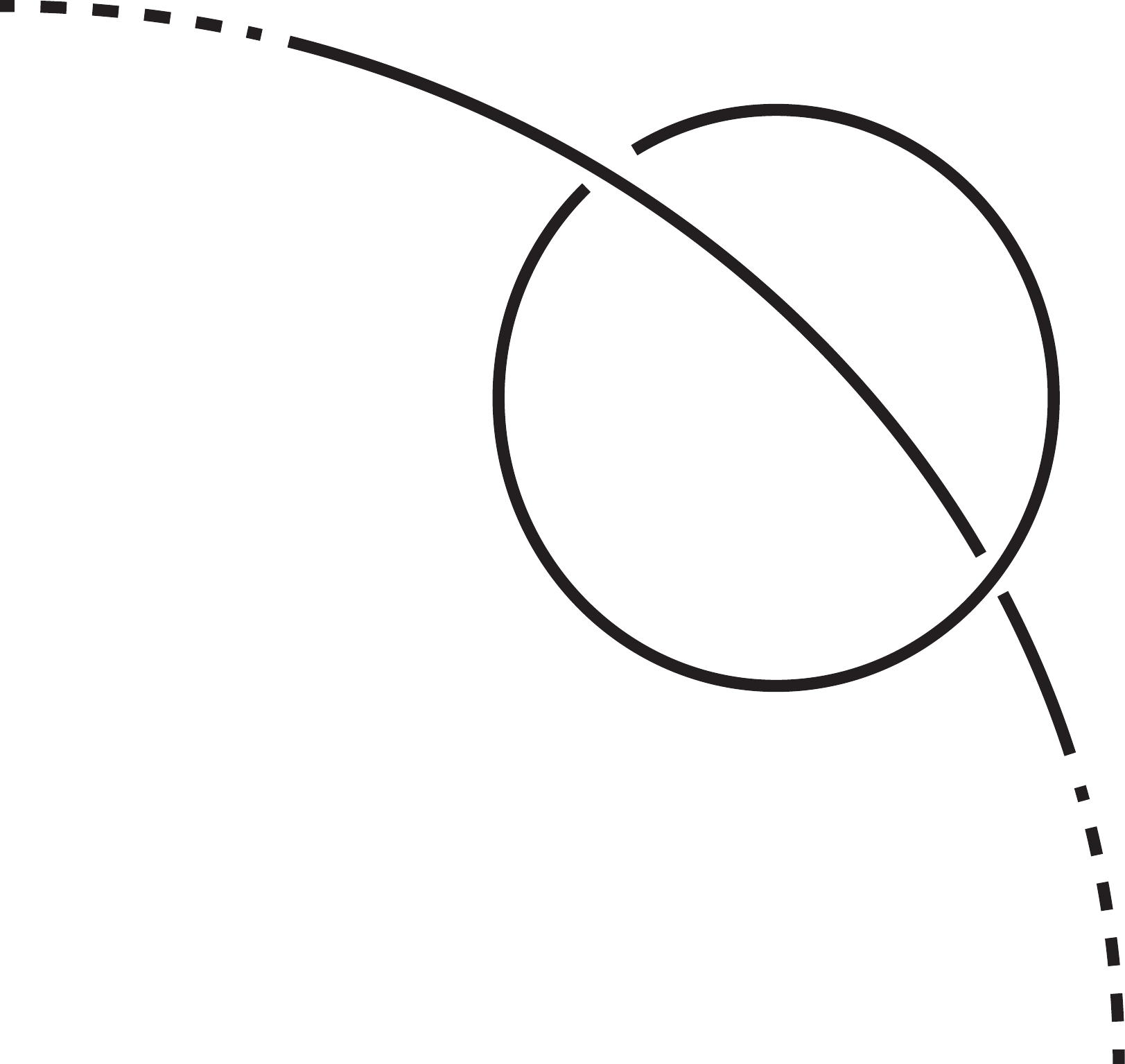}
\caption{A twist region containing two bigon regions, $A$ and $B$, that share an edge.}
\label{fig:3.eps}
\end{figure}

%\begin{figure}[h]
%\centering \scalebox{.25}{\includegraphics{3r.eps}}
%\caption{A twist region containing two bigon regions, $A$ and $B%$, that share an edge.}\label{fig:3.eps}
%\end{figure}

\begin{theorem}\label{prime}\cite{Menasco}
If $K$ is a link with reduced, alternating diagram $D$, then $K$ is non-split if and only if $D$ is connected and $K$ is prime if and only if $D$ is connected and prime.
\end{theorem}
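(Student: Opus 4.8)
The plan is to split each equivalence into an elementary direction and a hard direction, and to treat the two hard directions together. The elementary directions are the implications ``$D$ disconnected $\Rightarrow K$ split'' and ``$D$ connected but non-prime $\Rightarrow K$ composite,'' which are the contrapositives of ``$K$ non-split $\Rightarrow D$ connected'' and ``$K$ prime $\Rightarrow D$ connected and prime.'' For the first, the components of a disconnected $D$ lie in disjoint subdisks of the projection sphere $S$, and a sphere separating these subdisks splits $K$. For the second, the simple closed curve $C$ witnessing non-primeness meets $D$ in two non-vertex points with crossings of $D$ on both sides; capping $C$ off by pushing the two disks it bounds in $S$ slightly to either side of $S$ yields a $2$-sphere meeting $K$ in two points that exhibits $K$ as a connected sum of two nontrivial links.

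The real content is in the converses, and I would argue both by contradiction, assuming an essential surface and extracting a forbidden feature of $D$. For the splitting statement the surface is an essential $2$-sphere disjoint from $K$; for the primeness statement it is a decomposing sphere meeting $K$ transversely in two points and separating it into two knotted arcs (equivalently, an essential meridional annulus in $S^3 \setminus K$). The central tool is Menasco's bubble decomposition: isotope $K$ to lie on the projection sphere $S$ except near each crossing, where it passes through a small ball (a \emph{bubble}) meeting $S$ in a circle, with the over-strand on the top hemisphere and the under-strand on the bottom. Then $S$ cuts $S^3$ minus the bubbles into an upper ball $B_+$ and a lower ball $B_-$, in each of which $K$ is a trivial tangle.

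The crux is to put the essential surface $F$ into standard position: $F$ meets $S$ in a nonempty collection of simple closed curves disjoint from the arcs of $D$, meets each bubble in a union of standard saddle disks, and is otherwise disjoint from the bubbles. With this normal form in hand I would run an innermost-curve argument on $F \cap S$. An innermost curve $\gamma$ bounds a disk of $F$ lying entirely in $B_+$ (say), and $\gamma$ itself bounds a disk $E \subset S$. The alternating and reduced hypotheses constrain how $\gamma$ can meet the bubble circles: either $\gamma$ meets no bubble, in which case $E$ carries either no crossings (allowing an isotopy that lowers the number of intersection curves) or else exhibits a two-point curve contradicting primeness or connectedness; or the local saddle picture at a bubble met by $\gamma$ is incompatible with alternation. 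Iterating the isotopies removes all of $F \cap S$, leaving $F$ inside a single ball $B_\pm$, where $K$ is trivial, so $F$ cannot be essential --- the desired contradiction.

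The main obstacle is precisely the establishment of the standard-position normal form together with the bubble-by-bubble saddle analysis: one must use the alternating and reduced conditions (and the exclusion of $(2,q)$-torus links, which is exactly where annular twist regions would enter) to rule out the degenerate local pictures, such as a saddle meeting a bubble trivially or an intersection curve entering and exiting a bubble on the same hemisphere. Once those local configurations are excluded, the global innermost-disk combinatorics that convert a geometric compression into a two-point curve in $D$ is comparatively mechanical. Thus essentially all of the difficulty is concentrated in the surface isotopy and the combinatorics internal to the bubbles, rather than in the final counting.
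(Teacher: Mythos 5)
The first thing to note is that the paper contains no proof of this statement: it is imported verbatim from Menasco \cite{Menasco} and used as a black box. So the only meaningful comparison is with Menasco's own argument, and your outline is in fact a faithful roadmap of it --- the bubble decomposition, standard position for an essential sphere (respectively, a twice-punctured sphere/meridional annulus), and the innermost-curve and saddle analysis are exactly the ingredients of the cited proof. As a reconstruction of the strategy behind the citation, it is on target.

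Judged as a proof, however, the proposal has genuine gaps. First, the entire mathematical content of the theorem lives in the two steps you explicitly defer: establishing the standard-position normal form and ruling out the degenerate saddle configurations using alternation. Naming these as ``the main obstacle'' is accurate but does not discharge them; nothing in the sketch indicates \emph{how} alternation forbids those local pictures, and that is the heart of Menasco's paper. Second, your claimed elementary direction ``$D$ connected but non-prime $\Rightarrow K$ composite'' is not elementary as stated: capping off the curve $C$ exhibits $K$ as a connected sum, but to conclude $K$ is composite you must know \emph{both} factors are non-trivial. Having crossings on both sides of $C$ does not give this for free --- a diagram with crossings can perfectly well represent an unknot --- and the fact that a reduced alternating diagram with at least one crossing represents a non-trivial link is itself a theorem (classically due to Bankwitz via determinant bounds, or recoverable from the Tait conjectures or from Menasco's machinery), not an observation. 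Finally, your parenthetical invoking the exclusion of $(2,q)$ torus links is misplaced: no such exclusion appears in, or is needed for, the prime/non-split characterization --- it belongs to the hyperbolicity statement (Theorem 2.1 of this paper). Indeed the present theorem applies to $(2,q)$ torus links, whose reduced alternating diagrams are connected and prime, and which are non-split and prime; building their exclusion into the proof would actually weaken the statement being proved.
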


The proof of Lemma \ref{upperbound} follows the general outline of the proofs of Theorem 4 and Corollary 5 in \cite{Blair1}. However, significant modification and additional work must be done to control the number of twist regions in $G$.

\begin{lemma}\label{upperbound}

Given any connected, $R2$-reduced, prime, non-alternating diagram, $D$, of a link $K$, we can augment the diagram by adding a single unknotted component, $U$, so that $K\cup U$ is a hyperbolic alternating augmentation of $D$ and $t(G)\leq 5t(D)$.

\end{lemma}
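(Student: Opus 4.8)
The plan is to construct the augmenting unknot $U$ in two stages---first producing an alternating \emph{unlink} augmentation via Remark \ref{UnlinkRemark}, then merging its components into a single embedded circle with the Type I and Type II moves---while tracking at every stage how many twist regions are created. Essentially all of the content is in the twist-number estimate, so I would begin by recording the two standing simplifications. Since $D$ is non-alternating it is not the standard $(2,n)$ torus diagram, so by the contrapositive of Lemma \ref{anulus} every twist region of $D$ is a disk; thus each twist region is an honest chain of $k$ crossings and $k-1$ bigons, which is the combinatorial model the counting will use.

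The key combinatorial input is a bound on the number of non-alternating edges of $D$. First I would observe that because $D$ is $R2$-reduced, \emph{every bigon of $D$ is a clasp}: a bigon whose strand passes over at both of its crossings is exactly the Reidemeister-II configuration forbidden in Figure \ref{fig:R2Move.eps}. Hence both edges of every bigon are alternating edges, and so every non-alternating edge is a non-bigon edge, i.e.\ one of the four edges emanating from the ends of a twist region. Now count edge-ends: a twist region with $k$ crossings uses $2(k-1)$ of its $4k$ edge-ends internally along its bigons and leaves exactly four free ends, so summing over the $t(D)$ twist regions there are $4t(D)$ free ends in all. Each non-bigon edge is free at both of its endpoints and so accounts for exactly two free ends, giving at most $2t(D)$ non-bigon edges and therefore at most $2t(D)$ non-alternating edges. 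By Remark \ref{UnlinkRemark} the augmenting unlink meets $K$ exactly once on each non-alternating edge, so it contributes at most $2t(D)$ crossings of $A$ with $K$.

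Next I would assemble $U$ from the unlink $U_0$ of Lemma \ref{BlairUnlink}/Remark \ref{UnlinkRemark} by connecting its components one at a time with Type I and Type II moves; by Lemmas \ref{TypeI} and \ref{TypeII} each such move preserves the alternating property (after choosing the signs of the new crossings), and each merge can be arranged to introduce only a bounded number of crossings and hence only $O(1)$ new bigon regions. To finish the estimate I would sort the twist regions of $G$ into three classes. Those consisting only of crossings of $D$ are the surviving bigon chains of $D$; since $A$ meets no bigon edge it cannot split a chain, so there are at most $t(D)$ of these. Those containing an $A$-$K$ crossing arising from a non-alternating edge number at most $2t(D)$ by the previous paragraph. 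Finally, those created during the merging step I would bound by $2t(D)$. Adding these gives $t(G)\le t(D)+2t(D)+2t(D)=5t(D)$, which is the factor $5$.

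It remains to check that $K\cup U$ is a hyperbolic alternating augmentation. By construction $G$ is alternating and $A$ is a simple closed curve, so $K\cup U$ is an alternating augmentation of $D$; arranging the merges so that $G$ stays connected, reduced, and prime, Theorem \ref{prime} shows $K\cup U$ is non-split and prime. Since $t(G)\ge 2$ while the standard $(2,n)$ torus diagram is a single non-disk twist region (Lemma \ref{anulus}), the link $K\cup U$ is not a $(2,q)$ torus link, so Theorem \ref{Menasco} yields hyperbolicity. The main obstacle is the merging step: I must simultaneously (i) reduce the unlink to a \emph{single} embedded unknot, (ii) keep the diagram alternating, and (iii) hold the number of newly created twist regions within the budget of $2t(D)$. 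Controlling the merges so that each costs only $O(1)$ twist regions \emph{and} that the number of merges is itself $O(t(D))$---rather than proportional to the a priori larger number of unlink components---is the delicate point where the ``significant modification and additional work'' over \cite{Blair1} is concentrated; ensuring primeness and reducedness of $G$ so that Menasco's theorem applies is a secondary technical burden.
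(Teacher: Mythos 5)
Your two-stage plan (an alternating unlink augmentation via Remark \ref{UnlinkRemark}, then merging the circles into one by Type I and Type II moves) is the same as the paper's, and your count of at most $2t(D)$ edges of $D$ outside twist regions agrees with the paper's counting argument. However, the heart of the lemma is exactly the step you defer: your assertion that ``each merge can be arranged to introduce only a bounded number of crossings and hence only $O(1)$ new bigon regions'' is false as a mechanism and is not how the bound is obtained. Two components of the projected unlink $A_u$ can be separated by arbitrarily many edges of $D$, and propagating one circle toward another by Type II moves creates two new crossings for \emph{every} edge that the connecting path crosses, so there is no uniform per-merge bound. Consequently your budget of $2t(D)$ twist regions for the entire merging stage is asserted rather than proved, and you flag this yourself as ``the delicate point'' --- an accurate self-assessment that what you have is an outline with the central estimate missing.

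The paper closes this gap with a global per-edge bound rather than a per-merge bound. It chooses a properly embedded arc $\mu$ in the closure of a complementary region of $A_u$, joining two distinct circles $C_i$ and $C_j$ and minimizing the number $\phi(\mu)$ of intersections with $D$; outermost-arc and surgery arguments then show $\mu$ can be taken (i) disjoint from all twist regions of $D$ (using Lemma \ref{anulus} to know these are disks), (ii) disjoint from every edge of $D$ that already meets $A_u$, and (iii) meeting each edge of $D$ at most once. Propagating $C_i$ along $\mu$ by Type II moves and connect-summing with a Type I move then merges two circles while preserving, inductively, the invariant that every component of the augmenting multicurve is disjoint from the twist regions of $D$ and crosses each edge of $D$ at most twice. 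At the end the single curve $A$ satisfies $i(A,D)\leq 2\cdot 2t(D)$, whence $t(G)\leq t(D)+i(A,D)\leq 5t(D)$. Note that under this construction the merge-created crossings alone are only bounded by twice the number of \emph{alternating} non-bigon edges, which can exceed your $2t(D)$ allotment; only the combined per-edge total $i(A,D)\leq 4t(D)$ is controlled, so your three-way decomposition of the budget does not match what the construction actually delivers. A smaller issue: primeness of $G$ cannot simply be ``arranged'' during the merges; the paper proves it by contradiction, showing a separating circle meeting $G$ twice would force all but at most one edge of $D$ to be non-alternating, which contradicts the parity of the number of non-alternating edges in a non-alternating diagram.
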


\begin{proof}

Let $D$ be an $R2$-reduced, non-alternating diagram of a non-split, prime link. By the proof of Lemma \ref{BlairUnlink} and Remark 1, we can create the link $K\cup \ob{U}$ such that $\ob{U}$ is an unlink, $\ob{U}$ projects to a disjoint collection of simple closed curves in the sphere of projection for $D$ and the corresponding diagram, $\ob{G}$, of $K\cup \ob{U}$ is alternating. Moreover, $\ob{G}$ is connected and the projection of $\ob{U}$ intersects each non-alternating edge of $D$ exactly once, but is otherwise disjoint from $D$. Hence, we can assume that the projection of $\ob{U}$ is disjoint from the twist regions of $D$.

Let $cl(\ob{G}\setminus D) =\bigcup_{1\leq i\leq n}C_i=A_u$ where each $C_i$ is a simple closed curve in the sphere of projection. If $n = 1$, then $\ob{U}$ consists of a single component, as desired, and we can procede to showing $t(G)\leq 5t(D)$ and $K\cup U$ is hyperbolic. Assume $n>1$. Let $\mu$ be an arc properly embedded in the closure of a path component of $S^2\setminus A_u$ which has end points in distinct boundary components, meets the edges of $D$ transversely and is disjoint from the vertices of $D$. Let $\phi(\mu)$ be the number of intersections between $\mu$ and $D$. Choose $\mu$ in such a way that it minimizes $\phi(\mu)$ over \emph{all possible choices of $\mu$ over all path components of $S^2\setminus A_u$}.

\begin{claim}\label{dijionttwist}

The arc $\mu$ can be chosen so that it does not pass through twist regions of $D$.

\end{claim}

\begin{proof}

    Since $\mu$ was chosen to be disjoint from the vertices of $D$, if $\mu$ intersects a twist region of $D$, then it intersects a bigon region of $D$. If $\mu$ intersects a bigon region of $D$ in an arc that has both endpoints on the same edge of the bigon region, then, after choosing an outermost such arc of intersection between $\mu$ and the bigon region, there is a planar isotopy of $\mu$ which eliminates an arc of intersection between $\mu$ and the bigon region. After this isotopy, $\mu$ intersects $D$ in fewer points, a contradiction to our choice of $\mu$. See Figure \ref{fig: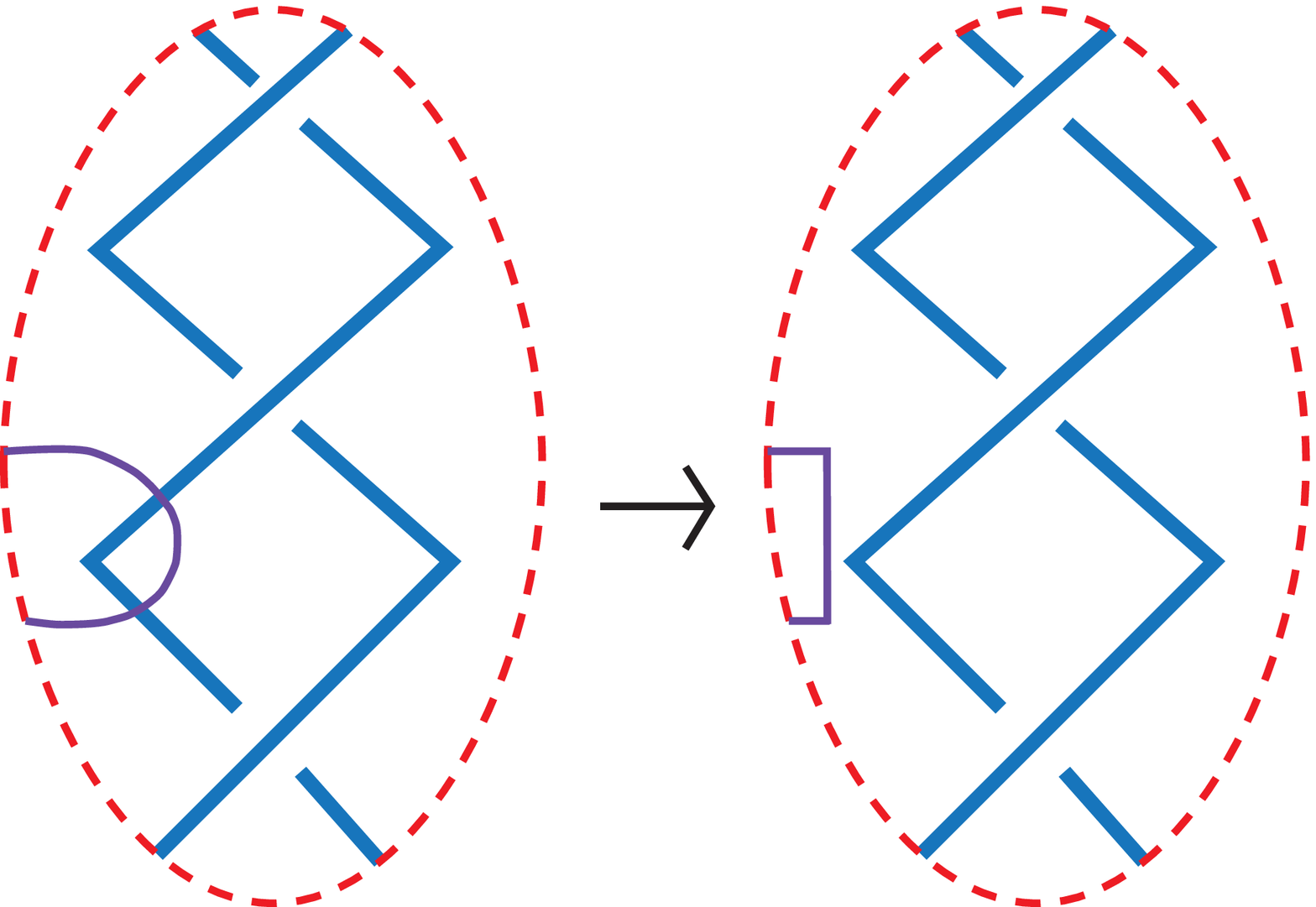}.

\begin{figure}[h]
\centering \scalebox{.3}{\includegraphics{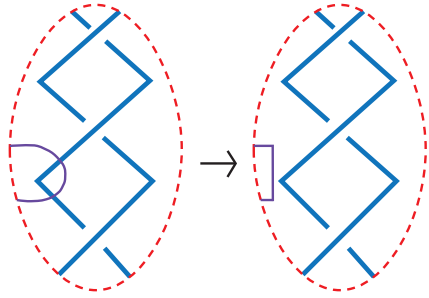}}
\caption{An isotopy that eliminates intersections between $\mu$ and $D$.}\label{fig:NotInTwistRegion1.eps}
\end{figure}

    Hence, if $\mu$ intersects a bigon region of $D$ in a non-empty collection of arcs, then each of these arcs has one endpoint in each of the two distinct boundary edges of the bigon region. Choose an outermost such arc and perform a planar isotopy of $\mu$ which removes this arc of intersection with the bigon region, but preserves $\phi(\mu)$. Repeat this process for each consecutive bigon region, always pushing arcs in the same direction. See Figure \ref{fig: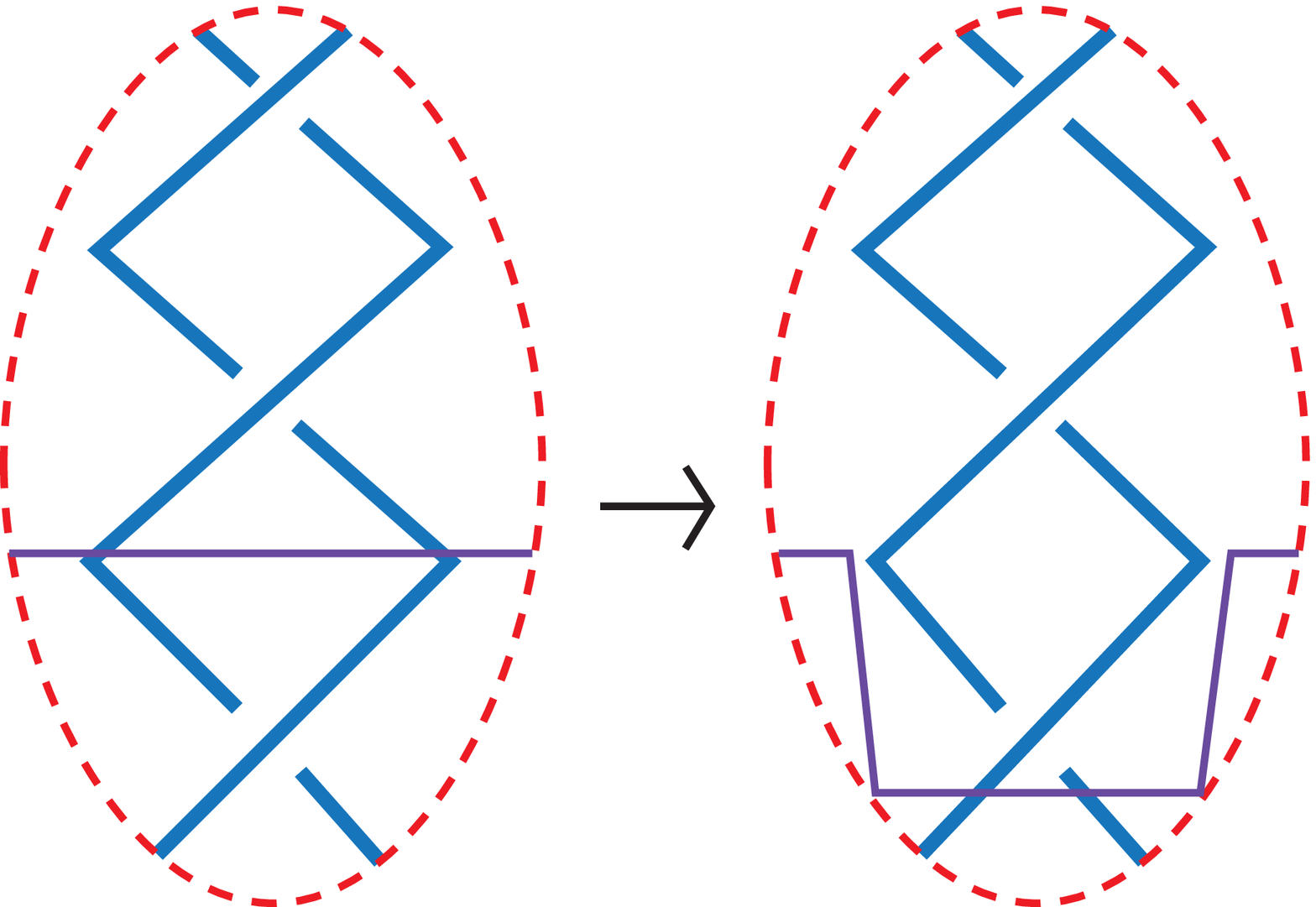}. Note that, by Lemma \ref{anulus}, all twist regions of $D$ are disks, since otherwise $D$ would be alternating. Hence, this sequence of isotopies terminates when $\mu$ is isotoped to be disjoint from all twist regions.

    %Repeat this process for each consecutive bigon region, always pushing arcs in the same direction, until $\mu$ is disjoint from all twist regions. See Figure \ref{fig:NotInTwistRegion2.eps}. Note that, by Lemma \ref{anulus}, all twist regions of $D$ are disks, since otherwise $D$ would be alternating. Hence, this sequence of isotopies terminates.

\begin{figure}[h]
\centering \scalebox{.3}{\includegraphics{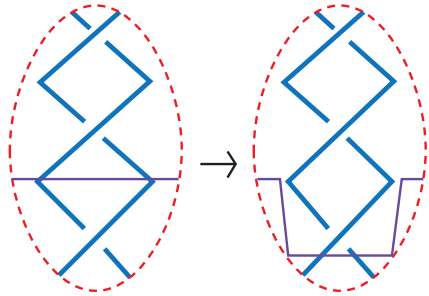}}
\caption{An isotopy that eliminates intersections between $\mu$ and a bigon region.}\label{fig:NotInTwistRegion2.eps}
\end{figure}

Since all of the planar isotopies of $\mu$ described in this claim are supported in a neighborhood of the twist regions of $D$ and $A_u$ is disjoint from the twist regions of $D$, then the interior of $\mu$ remains disjoint from $A_u$ during these isotopies. Hence, after these isotopies, $\mu$ remains a properly embedded arc in a closure of a path-component of $S^2\setminus A_u$ and $\phi(\mu)$ has not increased. Therefore, we can always assume that $\mu$ is disjoint from the twist regions of $D$.

\end{proof}

 By the previous claim, we can choose $\mu$ such that $\mu$ is disjoint from all twist regions and $\phi(\mu)$ is minimal. Let $H$ be the closure of the path component of $S^2\setminus A_u$ such that $\mu$ is properly embedded in $H$. Let $C_i$ and $C_j$ be the distinct boundary components of $H$ which have non-trivial intersection with $\mu$.

In the arguments that follow, we wish to understand how $\mu$ intersects $D$. Our immediate goal is to use induction and $\mu$ to create $G$, the projection of an alternating augmentation of $D$, with augmenting component $U$ which projects to a simple closed curve $A$ such that $A$ is disjoint from the twist regions of $D$ and $A$ meets each edge of $D$ at most twice.

%If $\mu$ is disjoint from $D$, then we may use a Type I move as described in Lemma \ref{TypeI} to join $C_i$ and $C_j$ into a single simple closed curve in the sphere of projection so the that resulting diagram is alternating. Hence, we will assume that $\mu$ has non-trivial intersection with $D$.

Suppose, by way of contraction, that $A_u$ and $\mu$ intersect a common edge of $D$ denoted $e$. Since $\mu$ is properly embedded in $H$, then there must be a point $a \in e\cap \mu$ and a point $b\in e\cap C_k$ where $C_k$ is some boundary component of $H$ such that $a$ and $b$ cobound a sub arc $\beta$ of $e$ such that the interior of $\beta$ is disjoint from $\mu \cup A_u$. Perform surgery on $\mu$ along the arc $\beta$ to create two new arcs $\mu_1$ and $\mu_2$. See Figure \ref{fig:Figure1.eps}.

\begin{figure}[ht]
\labellist \small\hair 2pt
\pinlabel {$\mu$} [b] at 230 240
\pinlabel {$C_{k}$} [b] at 80 230
\pinlabel {$\mu_{1}$} [b] at 690 235
\pinlabel {$\mu_{2}$} [b] at 690 -25
\pinlabel {$C_{k}$} [b] at 540 225
\endlabellist
\includegraphics[scale=.4]{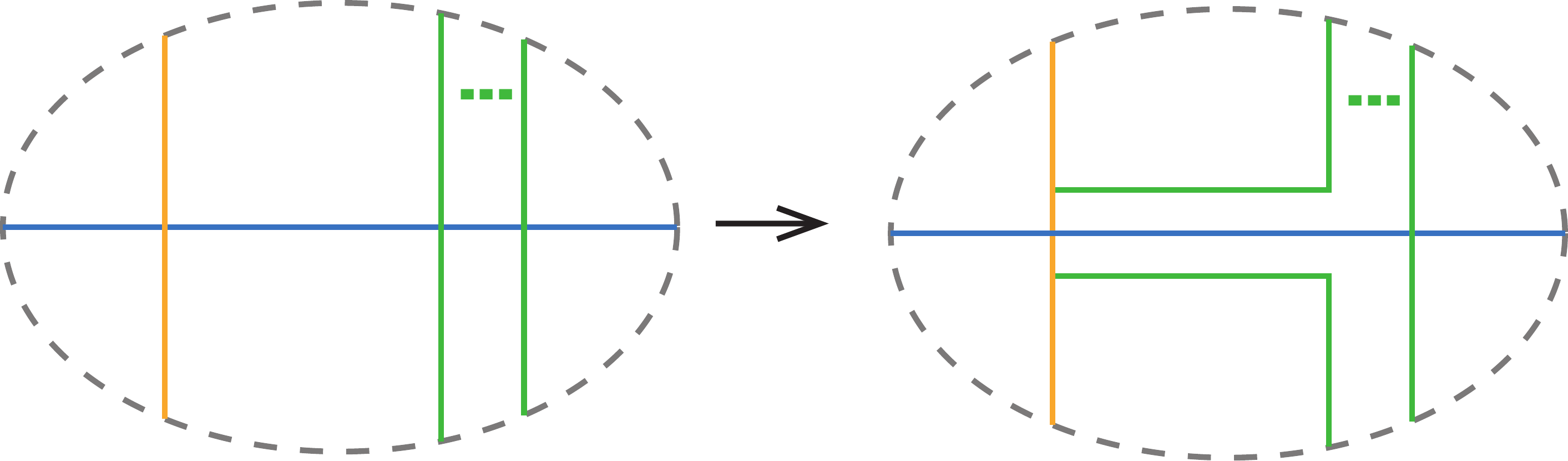}
\caption{A surgery on $\mu$ resulting in the arcs $\mu_1$ and $\mu_2$.}
\label{fig:Figure1.eps}
\end{figure}

%\begin{figure}[h]
%\centering \scalebox{.4}{\includegraphics{Figure1r.eps}}
%\caption{A surgery on $\mu$ resulting in the arcs $\mu_1$ and $\mu_2$.}\label{fig:Figure1.eps}
%\end{figure}

Without loss of generality, assume $\mu_1$ has an endpoint on $C_i$ and $\mu_2$ has an endpoint on $C_j$.

Since $C_k$ and $\mu$ are disjoint from all twist regions of $D$, then $\mu_1$ and $\mu_2$ are disjoint from all twist regions of $D$. Note that $\phi(\mu)=\phi(\mu_1)+\phi(\mu_2)-1$

    If $C_i=C_k$, then $\mu_2$ connects distinct components of $A_u$ and intersects $D$ in fewer points than $\mu$.

   If $C_j=C_k$, then $\mu_1$ connects distinct components of $A_u$ and intersects $D$ in fewer points than $\mu$.

If $C_k\neq C_i$ and $C_k \neq C_j$, then, since $\phi(\mu)=\phi(\mu_1)+\phi(\mu_2)-1$, $\mu_1$ has strictly fewer points of intersection with D than $\mu$ and $\mu_1$ connects 2 distinct components of $A_u$.

In each case, we have constructed a properly embedded arc in $H$ which connects distinct boundary components of $H$ and meets $D$ in fewer points than $\mu$, a contradiction. Thus, $\mu$ is disjoint from any edge of $D$ that meets $A_u$.

We proceed by showing that $\mu$ cannot intersect a single edge of $D$ more than once.

Suppose $\mu$ intersects an edge, $e$, of $D$ transversely in two or more points. Perform surgery on $\mu$ along a sub arc of $e$ (see Figure \ref{fig:Figure2.eps}) joining two consecutive points of $\mu\cap e$. Let $\mu^*$ be the 1-manifold produced by this surgery. If the resulting 1-manifold $\mu^*$ has 2 components, then one component of $\mu^*$ connects $C_i$ to $C_j$. Call this component $\mu'$. The other component of $\mu^*$ is a simple closed curve. If the resulting 1-manifold $\mu^*$ is a single arc connecting $C_i$ to $C_j$, then let $\mu'=\mu^*$.

\begin{figure}[ht]
\labellist \small\hair 2pt
\pinlabel {$\mu$} [b] at 230 240
\pinlabel {$e$} [b] at -15 108
\pinlabel {$e$} [b] at 445 105
\pinlabel {$\mu^{*}$} [b] at 695 235
\endlabellist
\includegraphics[scale=.4]{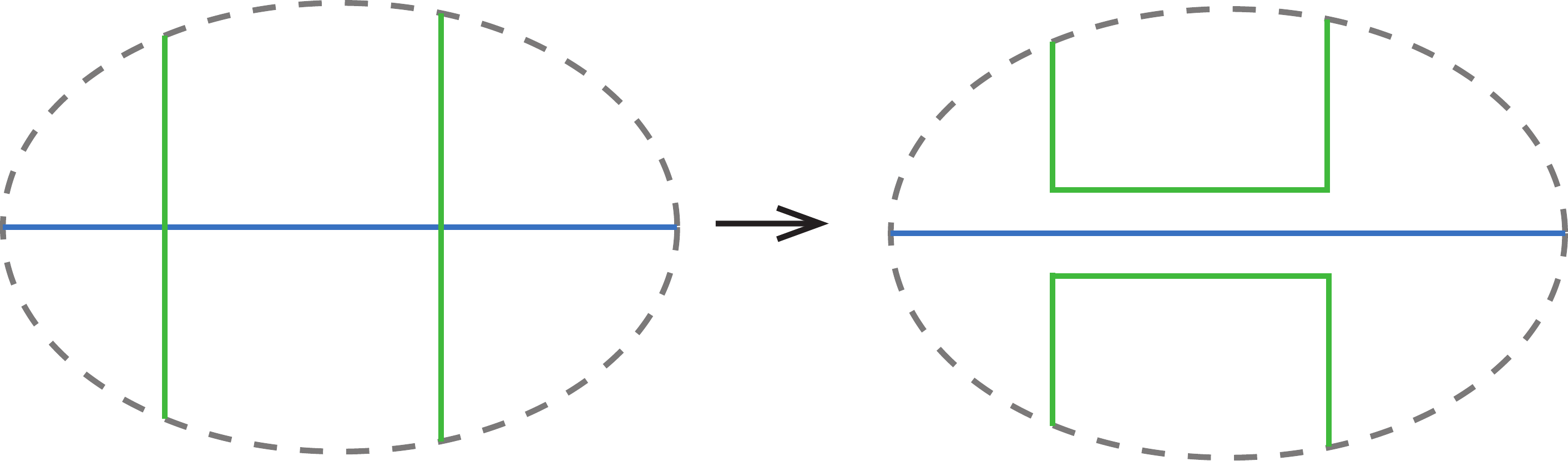}
\caption{A surgery on $\mu$ which decreases $D\cap \mu$.}
\label{fig:Figure2.eps}
\end{figure}

%\begin{figure}[h]
%\centering \scalebox{.4}{\includegraphics{Figure2r.eps}}
%\caption{A surgery on $\mu$ which decreases $D\cap \mu$.}%\label{fig:Figure2.eps}
%\end{figure}

In each case, there exists an arc $\mu'$ that connects $C_i$ to $C_j$ and intersects $D$ in at least 2 fewer points than $\mu$, thus, providing a contradiction to our choice of $\mu$. Hence, we can assume $\mu$ will not intersect an edge of $D$ more than once.

To summarize, we have just shown that we can pick $\mu$ so that it is disjoint from the twist regions of $D$, is disjoint from any edge of $D$ that meets $A_u$ non-trivially and intersects every edge of $D$ in at most one point.

We can propagate $C_i$ along $\mu$ using a sequence of Type II moves, as depicted in Figure \ref{fig:Type1Type2MovesFigure3.eps}, until $C^{*}_{i}$, which is the image of $C_i$ under Type II moves, and $C_j$ are incident to a common region of the resulting diagram. Call this resulting diagram $G^*$.

\begin{figure}[ht]
\labellist \small\hair 2pt
\pinlabel {$\mu$} [b] at 175 345
\pinlabel {$G$} [b] at 150 -40
\pinlabel {$G^{*}$} [b] at 570 -40
\pinlabel {$C_j$} [b] at 305 150
\pinlabel {$C_j$} [b] at 730 150
\pinlabel {$C_{i}$} [b] at 150 625
\pinlabel {$C_{i}^{*}$} [b] at 570 625
\pinlabel {$G^{**}$} [b] at 1000 -40
%\pinlabel {$e$} [b] at 445 105
%\pinlabel {$\mu^{*}$} [b] at 695 235
\endlabellist
\includegraphics[scale=.25]{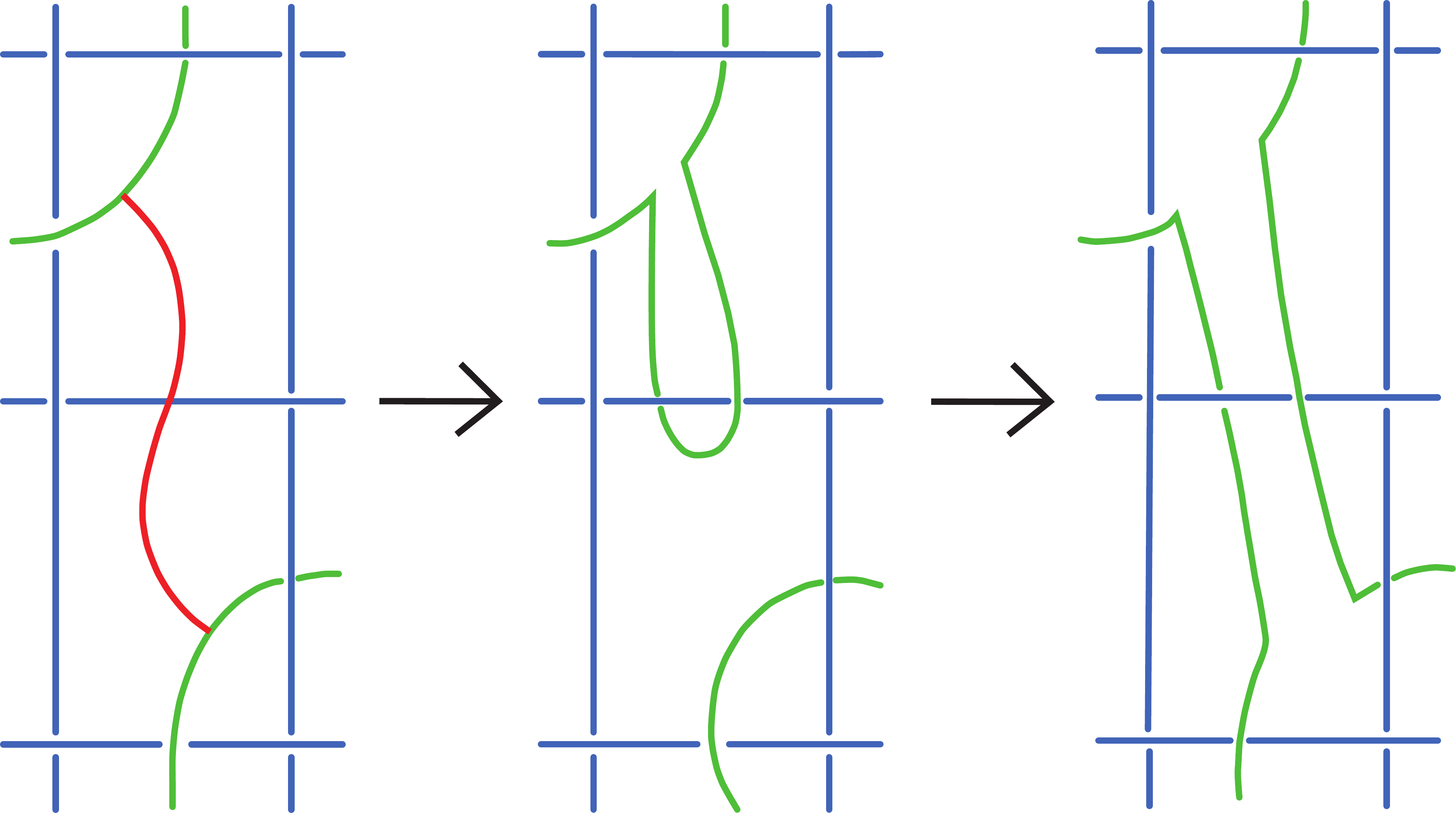}
\caption{Connecting components of $A_u$}\label{fig:Type1Type2MovesFigure3.eps}
\end{figure}

%\begin{figure}[h]
%\centering \scalebox{.25}%{\includegraphics{Type1Type2Movesr.eps}}
%\caption{Connecting components of $A_u$}\label{fig:Type1Type2MovesFigure3.eps}
%\end{figure}

%Call this projection $G^*$, which is alternating, and has components that do not go through twist regions of $D$, and only cross an edge of $D$ at most twice.

By  Lemma \ref{TypeII}, $G^*$ is alternating. Since $\mu$ is disjoint from the twist regions of $D$, then $cl(G^*\setminus D)$ is disjoint from the twist regions of $D$. Since $\mu$ meets every edge of $D$ at most once and $\mu$ is disjoint from every edge of $D$ that meets $A_u$, then every component of $cl(G^*\setminus D)$ meets an edge of $D$ at most twice.

%Since a Type II move is an isotopy of $C_i$ in $S^2$ and $\mu$ is properly embedded in $H$, then $C^*_i$ is a simple closed curve which does not intersect the other components of $cl(G\setminus D)$. Hence, $G^*$ is the projection of an alternating augmentation of $D$.

Next, we use the Type I move to connect sum the disjoint simple closed curves $C^*_i$ and $C_j$ into a single simple closed curve. Call the resulting projection $G^{**}$. Since $G^{*}$ is alternating, so is $G^{**}$, by Lemma \ref{TypeI}. Additionally, every component of $cl(G^{**}\setminus D)$ is disjoint from the twist regions of $D$ since $\mu$ was chosen to be disjoint from the twist regions of $D$. Finally, each component of $cl(G^{**}\setminus D)$ crosses every edge of $D$ at most twice, since $\mu$ was chosen to intersect each edge of $D$ at most once and since $\mu$ was chosen to be disjoint from any edge of $D$ that meets $A_u$. Hence, $G^{**}$ is an alternating link diagram containing $D$ and $cl(G^{**}\setminus D)$ consists of one fewer loop then $cl(\ob{G}\setminus D)$. Repeat this process to create $G$ the projection of an alternating augmentation of $D$ with exactly one augmenting component $U$ which projects to a simple closed curve $A$ in the sphere of projection. It follows from the above arguments that $A$ will be disjoint from the twist regions of $D$ and $A$ will meet each edge of $D$ at most twice.

Since $A$ is disjoint from all twist regions of $D$ and $A$ never intersects itself, then $t(G)\leq t(D) +i(A,D)$ where $i(A,D)$ is the number of intersections between $A$ and $D$. Since $A$ is disjoint from twist regions of $D$ and crosses each edge of $D$ not contained in a twist region of $D$ at most twice, then $i(A,D)$ is less than or equal to twice the number of edges of $D$ not contained in twist regions of $D$. However, the number of edges of $D$ not contained in twist regions of $D$ is less than or equal to $2t(D)$ since the union of the edges of $D$ not contained in twist regions of $D$ together with the twist regions of $D$ form a finite 4-valent graph in the sphere with each twist region corresponding to a single vertex. Thus,

$$t(G)\leq t(D) + 2(2t(D))=5t(D).$$

It remains to be shown that $G$ is the diagram of a hyperbolic link. By Theorem \ref{Menasco}, it is sufficient to show that the link $K\cup U$ is non-split, prime and not a $(2,q)$ torus link. Note that since we have assumed that $D$ is reduced and $A$ projects to a simple closed curve in the sphere of projection, then $G$ is reduced. Since $G$ is a reduced, alternating diagram, by Theorem \ref{prime}, $G$ is the diagram of a hyperbolic link if $G$ is prime, connected and not a diagram of a $(2,q)$ torus link.

Since we have assumed that $D$ is connected and $A$ is a simple closed curve in the sphere of projection with a non-trivial intersection with $D$, then $G$ is connected.

In search of a contradiction, suppose that $G$ is not prime. Hence, there exists a simple closed curve $\beta$ in the sphere of projection such that $\beta$ intersects $G$ transversely in exactly two points neither of which is a vertex of $G$ and both disks that $\beta$ bounds in the sphere of projection contain vertices of $G$. Let $E_1$ and $E_2$ denote the disks that $\beta$ bounds. Since both $D$ and $A$ are closed curves in the sphere of projection, then $\beta\cap G\subset D$ or $\beta\cap G\subset A$. If $\beta\cap G\subset A$, then, since $A$ is a simple closed curve and $D$ is connected, it is impossible for both $E_1$ and $E_2$ to contain a vertex of $G$. Hence, we can assume that $\beta\cap G\subset D$. Since $A$ is a connected subset of the sphere of projection which is disjoint from $\beta$, then $A\subset E_1$ or $A\subset E_2$. Without loss of generality, assume that $A\subset E_1$. Since both $E_1$ and $E_2$ must contain a crossing of $G$, then $E_2$ must contain a crossing of $D$. If $E_1$ also contains a crossing of $D$, then $D$ is not prime, a contradiction to our choice of $D$. Thus, the only crossings of $G$ contained in $E_1$ are points of intersection between $A$ and $D$. Since $E_1$ contains no crossings of $D$ and $\beta=\partial E_1$ meets $D$ in exactly two points, then $E_1\cap D$ consists of exactly one subarc of exactly one edge of $D$. Hence, $A$ meets at most one edge of $D$.

Since $G$ is alternating and $A$ meets at most one edge of $D$, then all but at most one edge of $D$ is non-alternating. Since every link diagram has an even number of non-alternating edges, then $D$ must be an alternating diagram. This is a contradiction to the fact that we choose $D$ to be non-alternating. Thus, $G$ is prime.

In search of a contradiction, suppose that $G$ is the diagram of a $(2,q)$ torus link. Recall that we have already established that $G$ is a prime, connected, reduced, alternating diagram. However, the only prime, connected, reduced, alternating diagram of a link which is not hyperbolic is the standard diagram of the $(2,q)$ torus link with $q$ crossings. This implies $G$ is the standard diagram of the $(2,q)$ torus link with $q$ crossings. Hence, both $D$ and $A$ are simple closed curves in the sphere of projection. However, this is a contradiction to $D$ being non-alternating.

Thus, $K\cup U$ is hyperbolic, completing the proof.

\end{proof}

\begin{theorem}
Given any prime, non-alternating knot $K$

$$V_3(t(K)-2)\leq AltVol(K)\leq 10V_3(5t(K)-1)$$
\end{theorem}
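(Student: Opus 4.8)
The plan is to deduce the theorem by feeding Lemma \ref{lowerbound} and Lemma \ref{upperbound} into Lackenby's volume bounds (Theorem \ref{Lackenby}) and the definition of $AltVol$. The lower bound will come from the minimizing augmentation together with Lemma \ref{lowerbound}, while the upper bound will come from applying Lemma \ref{upperbound} to a carefully chosen diagram of $K$. The geometric input (Theorem \ref{Lackenby}) applies only to prime, reduced, alternating diagrams, so the real content is arranging that every diagram in play meets those hypotheses while still controlling twist number.

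For the lower bound, I would let $U$ be an augmenting component realizing the minimum in the definition of $AltVol$, so that $AltVol(K)=vol(S^3\setminus(K\cup U))$ and $K\cup U$ is hyperbolic with alternating augmentation diagram $G\supset D$. After untwisting any nugatory crossings by Reidemeister I moves I may assume $G$ is reduced; this preserves the link type (hence the volume) and keeps $G$ an alternating augmentation of some diagram $D'$ of $K$. Since $K\cup U$ is hyperbolic it is non-split and prime, so by Theorem \ref{prime} the reduced, connected, alternating diagram $G$ is prime. Lackenby's lower bound then gives $V_3(t(G)-2)\leq vol(S^3\setminus(K\cup U))=AltVol(K)$, while Lemma \ref{lowerbound} gives $t(K)\leq t(D')\leq t(G)$. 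Combining these,
$$V_3(t(K)-2)\leq V_3(t(G)-2)\leq AltVol(K).$$

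For the upper bound, I would start from a diagram $D$ of $K$ with $t(D)=t(K)$ and simplify it—by Reidemeister II reductions, removal of nugatory crossings, and (using that $K$ is prime) absorption of any trivial connect-summands—into a connected, $R2$-reduced, prime diagram. None of these moves raises the twist number, so the equality $t(D)=t(K)$ is preserved, and $D$ is automatically non-alternating because $K$ is a non-alternating knot. Lemma \ref{upperbound} then produces a single augmenting component $U$ such that $K\cup U$ is a hyperbolic alternating augmentation whose diagram $G$ is prime, connected, reduced, and alternating with $t(G)\leq 5t(D)=5t(K)$. By the definition of $AltVol$ and Lackenby's upper bound,
$$AltVol(K)\leq vol(S^3\setminus(K\cup U))\leq 10V_3(t(G)-1)\leq 10V_3(5t(K)-1).$$
Together the two displays yield the claimed inequalities.

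The step I expect to be most delicate is the bookkeeping that secures the diagram hypotheses without sacrificing twist-number control. For the upper bound this means verifying that each simplification of $D$ (to make it $R2$-reduced, prime, and connected) genuinely does not increase the twist number, so that $t(D)=t(K)$ survives and Lemma \ref{upperbound} may be invoked. For the lower bound the subtle point is that the minimizing augmentation is a priori only known to be hyperbolic, so I must justify that its diagram can be taken reduced and prime—via Reidemeister I untwisting and Theorem \ref{prime}—while still retaining the inclusion $D'\subset G$ required by Lemma \ref{lowerbound}. Once these reductions are checked, the inequalities follow immediately by chaining the two lemmas with Theorem \ref{Lackenby}.
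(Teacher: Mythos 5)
Your proposal is correct and follows essentially the same route as the paper: the lower bound via a reduced, volume-minimizing alternating augmentation combined with Lemma \ref{lowerbound}, Theorem \ref{prime}, and Lackenby's lower bound, and the upper bound via a twist-number-minimizing diagram made connected, reduced, $R2$-reduced, and prime, fed into Lemma \ref{upperbound} and Lackenby's upper bound. The only differences are cosmetic: you explicitly justify reducedness of $G$ and primeness of $D^*$ (steps the paper asserts more briefly, e.g.\ ``a twist number minimizing diagram of a prime knot is always a prime diagram''), which is sound.
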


\begin{proof}

Let $K$ be a prime, non-alternating knot and let $G$ be a reduced diagram of an alternating augmentation of $K$ whose volume is equal to $AltVol(K)$ and let $D$ be the diagram of $K$ that results from considering $G$ and ignoring the augmenting component. By Lemma \ref{lowerbound}, $t(D)\leq t(G)$. Since $G$ is the reduced alternating diagram of a hyperbolic link, then, by Theorem \ref{prime}, $G$ must be a prime diagram. Since $G$ is a reduced, prime, alternating diagram of a hyperbolic link, then, By Theorem \ref{Lackenby}, $V_3(t(G)-2)\leq AltVol(K)$. Hence,

$$V_3(t(K)-2)\leq V_3(t(D)-2)\leq V_3(t(G)-2) \leq AltVol(K).$$

Let $D^*$ be a diagram of $K$ such that $t(D^*)=t(K)$. Since $K$ is a knot, every diagram of $K$ is connected. Since flypes and the type II Reidermeister move that decreases crossing number can only decrease the number of twist regions in a link diagram, then we can assume that $D^*$ is reduced and $R2$-reduced. Since a twist number minimizing diagram of a prime knot is always a prime diagram, then $D^*$ is prime. Since $K$ is non-alternating, then $D^*$ is non-alternating. Let $U$ be the augmenting component and let $G$ be the diagram corresponding to the alternating augmentation of $D^*$ constructed in the proof of Lemma \ref{upperbound}. By Lemma \ref{upperbound}, we know that $G$ is alternating, $t(G)\leq 5t(D^*)=5t(K)$ and $K\cup U$ is a hyperbolic link. By Theorem \ref{Lackenby}, $vol(S^3\setminus (K\cup U))\leq 10V_3(t(G)-1)$. Since $AltVol(K)\leq vol(S^3\setminus (K\cup U))$, then $AltVol(K)\leq 10V_3(t(G)-1)$. Hence,

%We don't know that $G$ is a diagram of a hyperbolic link!!!

$$AltVol(K)\leq 10V_3(5t(K)-1)$$

\end{proof}

\section{Acknowledgements} The authors would like to thank Yo'av Rieck for many helpful discussions. We are grateful to the Undergraduate Research Group program organized by the California State University Alliance Preparing Undergraduates through Mentoring towards PhDs for helping to support this research project. The authors were partially supported by the National Science Foundation grants DMS--1247679 and DMS--1821254.

\end{document}